\theoremstyle{plain}
\newtheorem{thm}{Theorem}[subsection]
\newtheorem{coro}[thm]{Corollary}
\newtheorem{prop}[thm]{Proposition}
\newtheorem{defi}[thm]{Definition}
\newtheorem{theorem}{Theorem}
\newtheorem{corollary}{Corollary}
\theoremstyle{definition}
\newtheorem{rmrk}[thm]{Remark}
\theoremstyle{remark}
\newcommand{\R}{\mathbb{R}}
\newcommand{\Z}{\mathbb{Z}}
\newcommand{\N}{\mathbb{N}}
\renewcommand{\H}{\mathbb{H}}
\newcommand{\C}{\mathbb{C}}
\newcommand{\abs}[1]{\left|#1\right|}
\newcommand{\Q}{\mathbb{Q}}
\newcommand*{\rom}[1]{\romannumeral}
\renewcommand{\Re}{{Re}}
\providecommand{\msc}[1]{{\noindent\small\textbf{Mathematics Subject Classification (2020)} --- #1.}}
\providecommand{\keywords}[1]{{\noindent\small\textbf{Keywords} --- #1.}}
\DeclareMathOperator{\fl}{flat}
\DeclareMathOperator{\GL}{GL}
\DeclareMathOperator{\SO}{SO}
\DeclareMathOperator{\Spin}{Spin}
\DeclareMathOperator{\RE}{Re}
\DeclareMathOperator{\Tr}{Tr}
\DeclareMathOperator{\tr}{tr}
\DeclareMathOperator{\End}{End}
\DeclareMathOperator{\Vol}{Vol}
\DeclareMathOperator{\rank}{rank}
\DeclareMathOperator{\Id}{Id}
\DeclareMathOperator{\Ad}{Ad}
\DeclareMathOperator{\ad}{ad}
\DeclareMathOperator{\T}{T}
\DeclareMathOperator{\Norm}{Norm}
\DeclareMathOperator{\spec}{spec}
\DeclareMathOperator{\sign}{sign}
\DeclareMathOperator{\prim}{prime}
\DeclareMathOperator{\gr}{gr}
\DeclareMathOperator{\Rep}{Rep}
\DeclareMathOperator{\ev}{even}
\DeclareMathOperator{\im}{Im}
\DeclareMathOperator{\Ker}{Ker}
\DeclareMathOperator{\Flat}{Flat}
\DeclareMathOperator{\Mat}{Mat}
\numberwithin{equation}{section}
\renewcommand{\thethm}{%
\ifnum\value{subsection}=0
\thesection
\else
\thesubsection
	\fi
		.\arabic{thm}%
}
\providecommand{\contact}{{
	\bigskip
	\small

	\noindent
	\textsc{P.~Spilioti}, 
	Mathematisches Institut, Georg-August Universit\"at G\"ottingen, 
	Bunsenstrasse 3-5, D-37073 G\"ottingen, Germany
	\par\noindent\nopagebreak
	\textit{E-mail address:} \href{mailto:polyxeni.spilioti@mathematik.uni-goettingen.de}
	{\texttt{polyxeni.spilioti@mathematik.uni-goettingen.de}}}
	%\medskip
	% autre bloc
}
\title{Twisted Ruelle zeta function on hyperbolic manifolds and complex-valued analytic torsion}
\author{Polyxeni Spilioti}
\begin{document}
\maketitle

\begin{abstract}
In this paper, we study the twisted Ruelle zeta function
associated with the geodesic flow of a compact, hyperbolic, odd-dimensional manifold $X$.
The twisted Ruelle zeta function is associated with an acyclic representation $\chi\colon \pi_{1}(X) \rightarrow \GL_{n}(\C)$, which is close enough to an acyclic, unitary representation.  In this case, the twisted Ruelle zeta function is regular at zero and equals the square of the refined analytic torsion, as it is introduced by Braverman and Kappeler
in \cite{BK2}, multiplied by an exponential, which involves the eta invariant of the even part of the odd-signature operator, associated with $\chi$.
\end{abstract}

\bigskip
\keywords{Twisted Ruelle zeta function, determinant formula, Cappell-Miller torsion, refined analytic torsion,
Fried's conjecture}
\newline
\msc{Primary: 37C30; Secondary: 58J52, 22E45, 11F72, 11M36, 43A85}
\tableofcontents

\section{Introduction}

The twisted dynamical zeta functions of Ruelle and Selberg are dynamical zeta functions, which are associated with the geodesic flow on the unit sphere bundle $S(X)$ of  a compact, hyperbolic manifold $X$. They are defined in terms of the lengths of the closed geodesics, also 
called length spectrum. The twisted dynamical zeta functions are defined by Euler-type products, which converge in some right half-plane of $\C$. The main goal of this paper is to prove that the twisted Ruelle zeta function associated with an acyclic representation of the fundamental group of $X$, which is close enough to an acyclic and unitary one, 
is regular at zero and moreover is equal to the Cappell-Miller torsion as it is introduced in 
\cite{cm}. For such representations, there exists a precise relation between the Cappell-Miller torsion and the refined analytic torsion, as it is introduced by Braverman and Kappeler in \cite{BK2}.
Hence, we conclude that the twisted Ruelle zeta function at zero is equal to the square of the refined analytic torsion multiplied by an exponential, which involves the eta invariant of the even part of the odd-signature operator associated with $\chi$.

\paragraph{Fried's conjecture and related work.}
Fried in \cite{Fried} proved that for a closed, oriented, hyperbolic manifold
$X=\Gamma\backslash \H^{d}$, and an acyclic, orthogonal representation $\varrho_{1}$ of
$\pi_{1}(S(X))$, the Ruelle zeta function, defined for Re$(s)>d-1$ by 
\begin{equation*}
 R(s;\varrho_{1}):=\prod_{\substack{[\gamma]\neq e,\\ [\gamma]\prim}}{\det(\Id-\varrho_{1}(\gamma)e^{-sl(\gamma)}}),
\end{equation*}
admits a meromorphic extension to $\C$,
and for $\epsilon=(-1)^{d-1}$
\begin{equation}
\vert R(0;\varrho)^{\epsilon}\vert=\tau_{\varrho}(S(X)),
\end{equation}
where $\tau_{\varrho_{1}}(S(X))\in\R^{+}$ is the Reidemeister torsion,
a topological invariant (\cite{reidemeister1935homotopieringe},
\cite{franz1935torsion}, \cite{DR}).

Fried in \cite{fried1987lefschetz} conjectured that the same result holds for 
all compact locally homogeneous Riemannian spaces $X$ and 
acyclic bundles over $S(X)$. This conjectured was proved by
\cite{MS91} and \cite{Shen2018} for negatively curve locally symmetric spaces.

Fried, in the same paper, dealt mostly with
hyperbolic manifolds of odd dimension.
Considering  an acyclic, orthogonal representation $\varrho$
of $\pi(X)$ and
using the Selberg trace formula for the heat operator $e^{-t\Delta_{j}}$, 
where $\Delta_{j}$ is the Hodge Laplacian  on $j$-forms on $X$
with values in the flat vector bundle  $E_{\varrho}$ associated with $\varrho$,
he proved the meromorphic continuation of the zeta functions to the whole complex plane, as well as functional equations for the Selberg zeta function (\cite[p. 531-532]{Fried}). 
%Under the assumption that
%$\varrho$ is acyclic, i.e.,  the cohomology with coefficients in the local system 
%defined by $\rho$ vanish for all $j$,
%the Ruelle zeta function, defined for Re$(s)>d-1$ by 
%\begin{equation*}
 %R(s;\varrho):=\prod_{\substack{[\gamma]\neq e,\\ [\gamma]\prim}}{\det(\Id-\varrho(\gamma)e^{-sl(\gamma)}}),
%\end{equation*}
%admits a meromorphic extension to $\C$.
In addition, he proved that  $R(0;\varrho)$ is regular at $s=0$ and
\begin{equation}\label{fried}
R(0;\varrho)=T_{\varrho}(X)^2,
\end{equation}
where $T_{\varrho}(X)$ is the Ray-Singer analytic torsion defined as in \cite{RS}
(see \cite[eq. (18)]{Fried}).
By Cheeger-M\"uller theorem (\cite{cheeger1979analytic}, \cite{muller1978analytic}), the analytic torsion
$T_{\varrho}(X)$ is equal to the Reidemeister torsion $\tau_{\varrho}(X)$.
Since $\pi_{1}(S(X))\cong \pi_{1}(X)$ and $\tau_{\varrho}(X)^{2}=\tau_{\varrho_{1}}(S(X))$ (\cite[p. 526]{Fried}),
one obtains (1.1).
In fact, Fried proved a more general result. If the representation 
$\varrho$ is not assumed acyclic, then by \cite[Theorem 3]{Fried}, 
the leading term in the Laurent expansion of $R(s;\varrho)$
at $s=0$ is 
\begin{equation*}
C_{\varrho}T_X(\varrho)^2 s^{e},
\end{equation*}
where $e$ is a linear combination of the twisted Betti 
numbers $\beta_{i}=\dim H^{i}(X;\varrho)$,
and $C_{\varrho}\in\Q$ is defined in therms of $\beta_{i}$.
The theorem of Fried gave rise to important applications,
such as the asymptotic behavior of the analytic torsion (\cite{M2}) 
and the study of the growth of the torsion in the cohomology of a closed arithmetic hyperbolic $3$-manifold (\cite{MMar}).

The connection of the Ruelle zeta function to spectral invariants such as the analytic 
torsion has been studied by Bunke and Olbrich (\cite{BO}), Wotzke (\cite{Wo}), 
M\"{u}ller (\cite{M2}) for compact hyperbolic manifolds, under certain assumptions on the representation of the fundamental group of the manifold.
For the case of a hyperbolic manifold of finite volume, we refer the reader to the work of 
Park (\cite{Park}) and Pfaff (\cite{pfaff2}, \cite{pfaff1}).
An advanced study of the dynamical zeta functions of locally symmetric manifolds of higher rank is  due to Moscovici and Stanton in \cite{MS91}, Deitmar in \cite{Deit1}, Shen in \cite{Shen2018} and Moscovici, Stanton and Frahm in \cite{MSF}. We mention also the work of Dang, Guillarmou, Riviere and Shen (\cite{dang2019fried})
where the Fried 's conjecture is treated for dimension $3$,  for Anosov
flows near the geodesic flow on the unit tangent bundle of hyperbolic $3$-manifolds.
This result was extended by Chaubet and Dang to higher dimensions in \cite
{chaubet2019dynamical}, where a new object is introduced, the dynamical torsion.
Ceki{\'c}, Delarue, Dyatlov and Paternain studied the 
Ruelle zeta function at zero for nearly hyperbolic 3-manifolds in \cite{cekic2022ruelle}.
The twisted Ruelle zeta function associated with a non-unitary representation and its relation to topological invariants
has been studied by Frahm and the author in \cite{frahm2023twisted}, in the case of compact hyperbolic surfaces,
and by B{\'e}nard, Frahm and the author in \cite{benard2021twisted}, in the case of compact hyperbolic orbisurfaces.
Recently, Hochs and Saratchandran in \cite{hochs2023ruelle} dealt with an equivariant generalisation of Fried's conjecture 
for proper group actions on smooth manifolds, with compact quotients.

For unitary representations, the dynamical zeta functions have been studied by Bunke and Olbrich in \cite{BO} for all locally symmetric spaces of real rank 1. They proved that the zeta functions admit a meromorphic continuation to the whole complex plane and satisfy functional equations. 
Moreover, for compact, hyperbolic, odd dimensional manifolds,
and under the additional assumption that the representation is acyclic,
they proved that the Ruelle zeta function is regular at zero and equals 
the square of the analytic torsion as in {\eqref{fried}} (\cite[Theorem 4.8]{BO}).

Wotzke in \cite{Wo} extended this result for representations of $\Gamma$, which 
are not necessary unitary, but very special ones.
In particular, he considered a compact 
odd-dimensional, hyperbolic, manifold and a finite-dimensional, irreducible, representation 
$\tau\colon G\rightarrow \GL(V)$ of $G$, such that $\tau\neq\tau_{\theta}$, where 
$\tau_{\theta}=\tau\circ\theta$ and
$\theta$ denotes the Cartan involution of $G$.
Under these assumptions, he proved that the Ruelle zeta function 
admits a meromorphic continuation to the whole 
complex plane. In addition, it is regular at zero and
\begin{equation*}
 \abs{{R(0;\tau)}}=T_{\tau}(X)^2,
\end{equation*}
where the representation $\tau$ restricted to $\Gamma$.
Wotzke's method is based on the fact that if one considers the restrictions $\tau|_{K}$ and $\tau|_{\Gamma}$ of $\tau$
to $K$ and $\Gamma$, respectively, there is an isomorphism 
between the locally homogeneous vector bundle $E_{\tau}$ over $X$, 
associated with $\tau|_{K}$, and the flat vector bundle $E_{\fl}$ over $X$
associated with $\tau|_{\Gamma}$.
By \cite[Lemma 3.1]{MM}, a Hermitian 
fiber metric in $E_{\tau}$ descends to a fiber metric in $E_{\fl}$. 
Therefore, one deals with self-adjoint Laplacians and 
all tools from harmonic analysis on locally symmetric spaces are available.

\paragraph{Results.}

We consider an oriented, compact,  hyperbolic manifold $X$ of odd dimension $d$, obtained as follows. 
Let either $G=\SO^{0}(d,1)$, $K=\SO(d)$ or $G=\Spin(d,1)$, $K=\Spin(d)$. Then, $K$ is a maximal compact subgroup of $G$. Let $\widetilde{X}:=G/K$. The space $\widetilde{X}$ can be equipped with a $G$-invariant metric, which is unique up to scaling and is of constant negative curvature. If we normalize this metric such that it has constant negative curvature $-1$, then $\widetilde{X}$, equipped with this metric, is isometric to the $d$-dimensional hyperbolic space $\H^{d}$. Let $\Gamma$ be a discrete torsion-free subgroup of $G$ such that $\Gamma\backslash G$ is compact.  Then, $\Gamma$ acts by isometries on $\widetilde X$ and $X=\Gamma\backslash \widetilde X$ is a
compact, oriented, hyperbolic manifold of dimension $d$. This is a case of a locally symmetric space of non-compact type of real rank 1. This means that in the Iwasawa
decomposition $G=KAN$, $A$ is a multiplicative torus of dimension 1, i.e.,
$A\cong\R^+$. 

For a given $\gamma\in\Gamma$ we denote by $[\gamma]$ the $\Gamma$-conjugacy class
of $\gamma$. If $\gamma\neq e$, then there is a unique closed geodesic 
$c_{\gamma}$ associated with $[\gamma]$. Let $l(\gamma)$ denote the length of
$c_{\gamma}$. The conjugacy class $[\gamma]$ is called primitive if 
there exists no $k\in\N$ with $k>1$ and $\gamma_{0}\in\Gamma$ such that $\gamma=\gamma_{0}^{k}$. The prime closed geodesics correspond to the primitive conjugacy classes and are those  geodesics that trace out their image exactly once. 
Let $M$ be the centralizer of $A$ in $K$. Since $\Gamma$ is a cocompact subgroup of $G$, every element $\gamma\in \Gamma-\{e\}$ is hyperbolic.
Then, by \cite[Lemma 6.5]{Wa}, there exist a $g\in G$, a $m_{\gamma} \in M$, and an 
$a_{\gamma} \in A$, such that $ g^{-1}\gamma g=m_{\gamma}a_{\gamma}$.
The element $m_{\gamma}$ is determined up to conjugacy in $M$, and the element $a_{\gamma}$ depends only on $\gamma$.

Let $\mathfrak{g},\mathfrak{a}$ be the Lie algebras of $G$ and $A$, respectively.
Let $\Delta^{+}(\mathfrak{g},\mathfrak{a})$ be the set of positive roots of $(\mathfrak{g},\mathfrak{a})$.
Then, $\Delta^{+}(\mathfrak{g},\mathfrak{a})$ consists of a single root 
$\alpha$. Let $\mathfrak{g}_{\alpha}$ be the corresponding root space.
Let $\overline{\mathfrak{n}}$ be the negative root space of $(\mathfrak{g},\mathfrak{a})$.
Let $S^k(\Ad(m_\gamma a_\gamma)|_{\overline{\mathfrak{n}}})$ be the $k$-th
symmetric power of the adjoint map $\Ad(m_\gamma a_\gamma)$ restricted to $\overline{\mathfrak{n}}$
and $\rho$ be defined as $\rho:=\frac{1}{2}\dim(\mathfrak{g}_\alpha)\alpha$. 
We define the twisted zeta functions associated
with unitary, irreducible representations $\sigma$ of $M$ and 
finite-dimensional, complex representations $\chi$ of $\Gamma$.
The twisted Selberg zeta function $Z(s;\sigma,\chi)$ is defined for $s\in\C$ by the infinite product
\begin{equation}\label{zeta1}
Z(s;\sigma,\chi):=\prod_{\substack{[\gamma]\neq e,\\ [\gamma]\prim}} \prod_{k=0}^{\infty}\det\Big(\Id-\big(\chi(\gamma)
\otimes\sigma(m_\gamma)\otimes S^k(\Ad(m_\gamma a_\gamma)|_{\overline{\mathfrak{n}}})\big)e^{-(s+|\rho|)l(\gamma)}\Big).
\end{equation}

The twisted Ruelle zeta function $R(s;\sigma,\chi)$ is defined for $s\in\C$ by the infinite product
\begin{equation}\label{zeta2}
 R(s;\sigma,\chi):=\prod_{\substack{[\gamma]\neq{e}\\ [\gamma]\prim}}\det(\Id-(\chi(\gamma)\otimes\sigma(m_{\gamma}))e^{-sl(\gamma)}).
\end{equation}
Both $Z(s;\sigma,\chi)$ and $R(s;\sigma,\chi)$ converge absolutely and uniformly on compact subsets of some half-plane of $\C$ (\cite[Proposition 3.4 and Proposition 3.5]{Spilioti2018}).

In \cite{Spilioti2018},  it was proved that the twisted dynamical zeta functions
associated with a general, finite-dimensional, complex replresentation of
the fundamental group, as in 
in {\eqref{zeta1}} and {\eqref{zeta2}}, admit a meromorphic continuation 
to the whole complex plane. In \cite{spilioti2020functional}, the functional equations for them
are derived. Morover, it is shown a determinant formula , which relates the 
twisted Ruelle zeta function with a finite product of 
graded regularized determinants of certain,  
twisted (non-self-adjoint), elliptic differential operators
(see  \cite[Proposition 7.9, case (a)]{spilioti2020functional}).
These results extend the results of 
Bunke and Olbrich to the case of non-unitary twists.
The determinant formula is the keypoint to prove 
our main results, Theorem 1, Theorem 2 and Corollary 1, below.

In the present paper, we consider acyclic representations of $\Gamma$, which are non-unitary but closed enough to unitary ones. We provide here some details about these representations. 
For a compact, oriented, odd-dimensional, Riemannian manifold $(X,g)$ and a complex vector bundle $E\rightarrow X$, equipped with a flat connection $\nabla$, Braverman and Kappeler
in \cite{BK2} considered the odd signature operator $B=(\nabla,g)$, acting on  the space of smooth differential forms $\Lambda^{k}(X,E)$ on $X$, with values in $E$ (see Section  6.2 in \cite{BK2} and Section 6.1 in the present paper). This is a first order, elliptic differential operator,  which is in general non-self-adjoint. 
Suppose that there exists a Hermitian metric on $E$, which is preserved by $\nabla$. In such a case, we 
say that $\nabla$ is a Hermitian connection. Then, the operator is (formally) self-adjoint. Hence, if we assume  further that $\nabla$ is acyclic, i.e., $\im(\nabla|_{\Lambda^{k-1}(X,E)})=\Ker(\nabla|_{\Lambda^{k}(X,E)})$, for every $k$, the odd signature operator $B$ is bijective. 
If there is no Hermitian metric on $E$, which is preserved by $\nabla$, then the odd signature operator is no
longer self-adjoint. One can assume that $\nabla$ is acyclic, but this does not imply that the odd signature operator has a trivial kernel, since Hodge theory is no longer applicable.

However, by a continuity argument, \cite[Proposition 6.8]{BK2},  the following assumptions
\newline
\textbf{Assumption 1.}
The connection $\nabla$ is acyclic, i.e., for every $k$,
$\im(\nabla|_{\Lambda^{k-1}(X,E)})=\Ker(\nabla|_{\Lambda^{k}(X,E)})$;\\
\textbf{Assumption 2.}
The odd signature operator $B\colon \Lambda^{k}(X,E)\rightarrow \Lambda^{k}(X,E)$ is bijective;\\
are  satisfied for all flat connections in an open neighbourhood, in a suitable $C^{0}$-topology
(see Subsection 6.2), of the set of acyclic, Hermitian connections. 

Let $\Rep(\pi_{1}(X),\C^{n})$  be the set of all $n$-dimensional, complex representations of $\pi_{1}(X)$. This set has a natural structure of a complex algebraic variety (see Subsection 6.3). Each representation $\chi\in \Rep(\pi_{1}(X),\C^{n})$ gives rise to a vector bundle $E_{\chi}$ with a flat connection $\nabla_{\chi}$, whose monodromy is isomorphic to $\chi$. 
Let  $\Rep_{0}(\pi_{1}(X),\C^{n})$ be the set of all acyclic representations of $\pi_{1}(X)$, i.e., the set of representations $\chi$ such that $\nabla_{\chi}$ is acyclic. Let $\Rep^{u}(\pi_{1}(X),\C^{n})$ be the set of all unitary representations of $\pi_{1}(X)$, i.e., the set of representations such that there exists a Hermitian scalar product $(\cdot,\cdot)$ on $\C^{n}$, which is preserved by the matrices $\chi(\gamma)$, for all $\gamma\in \pi_{1}(X)$. We set
\begin{equation*}
 \Rep^{u}_{0}(\pi_{1}(X),\C^{n}):=\Rep_{0}(\pi_{1}(X),\C^{n})\cap \Rep^{u}(\pi_{1}(X),\C^{n}).
\end{equation*}
Let $B_{\chi}:=B(\nabla_{\chi},g)\colon\Lambda^{k}(X,E)\rightarrow\Lambda^{k}(X,E)$.
Let $B_{\chi}^{\ev}$ be the restriction of $B_{\chi}$ to $\Lambda^{\ev}(X,E)$.
Suppose that for some representation $\chi_{0}\in \Rep_{0}(\pi_{1}(X),\C^{n})$  the  operator $B_{\chi_{0}}$ bijective . Then, there exists an open neighbourhood (in classical topology) $V \subset \Rep(\pi_{1}(X),\C^{n})$ of the set  $\Rep^{u}_{0}(\pi_{1}(X),\C^{n})$
of acyclic, unitary representations such that, for all $\chi\in V$,
 the pair $(\nabla_{\chi},g)$ satisfies Assumptions 1 and 2. 
For such representations Braverman and Kappeler defined a Riemannian metric-invariant, 
\textit{the refined analytic torsion}, which is a non zero complex number, defined by
\begin{equation*}
T_{\chi}:={\det}_{\gr,\theta}(B_{\chi}^{\ev})e^{i\pi \rank(E_{\chi})\eta_{tr}(g)}.
\end{equation*}
Here $\theta\in (-\pi,0)$ is an Agmon angle for $B_{\chi}^{\ev}$,
${\det}_{\gr,\theta}(B_{\chi}^{\ev})$ denotes the graded determinant of 
$B_{\chi}^{\ev}$ (see Remark 7.1.2), and $\eta_{tr}(g)=\frac{1}{2}\eta(0, B_{tr}(g))$, 
where $\eta(0, B_{tr}(g))$ denotes the eta invariant of the even part of the odd signature operator corresponding to the trivial line bundle, endowed with the trivial connection (see 
Definition \ref{defref}).
One can view this definition as a special case of the definition of the refined analytic torsion as an element of the determinant line in \cite{BK3}, where the assumption of the bijectivity of the odd signature operator is removed.

On the other hand, Cappell and Miller in \cite{cm}, defined another invariant, 
\textit{the Cappell-Miller torsion} $\tau\in\det(H^{*}(X,E))\otimes \det(H^{*}(X,E))$.
In this setting, the regularized determinants of the flat Hodge Laplacians are equipped.
To define the flat Laplacians, no use of a Hermitian metric on the flat vector bundle is needed (see Section 9). These operators coincide with the square $B^{2}$ of the odd signature operator $B$. If $\chi\in V$, the element of $\det(H^{*}(X,E_{\chi}))\otimes \det(H^{*}(X,E_{\chi}))$ does not contribute in the definition (see Definition 9.2). In such a case, 
the Cappell-Miller torsion $\tau_{\chi}$ is a complex number given by
\begin{equation*}
 \tau_{\chi}:=\prod_{k=0}^{d} \det(B_{\chi,k}^{2})^{k(-1)^{k+1}}.
\end{equation*}
Then, by Proposition 9.4 (see also \cite[Subsection 5.3]{br2008cano}),
for $\chi\in V$, 
\begin{equation*}
  \tau_{\chi}=T_{\chi}^{2} e^{2\pi i(\eta(B^{\ev}_{\chi})-\rank(E_{\chi})\eta_{\tr})}, 
\end{equation*}
where  $\eta(B^{\ev}_{\chi})$ denotes the eta invariant of the even part of the odd signature operator 
$B_{\chi}$.

Let now $X$ be an oriented, compact, hyperbolic, odd-dimensional manifold $X=\Gamma\backslash\H^{d}$ as above. 
By the determinant formula in \cite{spilioti2020functional}, we obtain the following theorem.

\begin{theorem}(\Cref{detth})
Let $\chi$ be a finite-dimensional complex representation of $\Gamma$.
Let  $\Delta^{\sharp}_{\chi,k}$ be the flat Hodge Laplacian, acting on 
the space of $k$-differential forms on $X$ with values in the flat vector bundle $E_{\chi}$.
Then, the Ruelle zeta function has the representation
\begin{align*}\label{det}
R(s;\chi)=\notag&\prod_{k=0}^{d-1}\prod_{p=k}^{d-1}{{\det}_{\gr}\big(\Delta^{\sharp}_{\chi,k}+s(s+2(\vert\rho\vert-p))\big)}^{(-1)^{p}}\\
&\cdot\exp\bigg((-1)^{\frac{d-1}{2}+1}\pi(d+1)\dim (V_{\chi})\frac{\Vol(X)}{\Vol(S^{d})}s\bigg),
\end{align*}
where $\Vol(S^{d})$ denotes the volume of the $d$-dimensional Euclidean unit sphere.
Let $d_{\chi,k}:=\dim\Ker(\Delta_{\chi,k})$. Then, the singularity of the Ruelle zeta function at $s=0$ is of order
\begin{equation*}
\sum_{k=0}^{(d-1)/2}(d+1-2k)(-1)^{k}d_{\chi,k}.
\end{equation*}
\end{theorem}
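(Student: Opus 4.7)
The plan is to deduce both statements directly from the determinant formula for $R(s;\chi)$ established in \cite[Proposition~7.9, case (a)]{spil2}, combined with a Kuga-type identification of the operators that appear.

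First, I would invoke the determinant formula of \cite{spil2}, which already expresses $R(s;\chi)$ as a finite alternating product of graded regularized determinants of twisted non-self-adjoint elliptic operators indexed by the irreducible unitary $M$-types $\sigma_p$ occurring in $\Lambda^{*}\overline{\mathfrak{n}}$, together with an explicit exponential prefactor arising from the identity contribution in the Selberg trace formula on $\H^d$. Both the alternating sign $(-1)^p$ and the shift $s(s+2(|\rho|-p))$ visible in the theorem are inherited from that formula: the sign comes from the exterior-algebra decomposition that relates the Ruelle zeta to an alternating product of Selberg zeta functions, while the combination $s(s+2(|\rho|-p)) = (s + |\rho| - p)^2 - (|\rho|-p)^2$ is the standard spectral substitution converting the Selberg argument $s + |\rho| - p$ into the Casimir parameter.

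Second, I would identify each operator appearing in that formula with a shifted flat Hodge Laplacian on twisted forms via Kuga's lemma on the hyperbolic symmetric space: the Casimir operator acts on sections of the locally homogeneous bundle $E(\sigma_p,\chi)$ as the Hodge Laplacian on $E_{\chi}$-valued $k$-forms, up to a scalar determined by the Casimir eigenvalue of $\sigma_p$. Substituting this identification turns each factor of the determinant formula into $\det_{\gr}(\Delta^{\sharp}_{\chi,k} + s(s+2(|\rho|-p)))^{(-1)^p}$, and the double product $\prod_{k=0}^{d-1}\prod_{p=k}^{d-1}$ arises because each $\sigma_p$ appears, by Weyl's construction, in the $M$-decomposition of the exterior algebra with multiplicities over form degrees $k \le p$. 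The exponential prefactor is then read off verbatim from the identity contribution of \cite{spil2}, via the explicit Plancherel density on $\H^d$.

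For the order of the singularity at $s=0$, I would count vanishing orders factor by factor. Since the generalized kernel of $\Delta^{\sharp}_{\chi,k}$ coincides with the twisted cohomology $H^{k}(X;E_{\chi})$ and has dimension $d_{\chi,k}$, the function $\lambda \mapsto \det_{\gr}(\Delta^{\sharp}_{\chi,k} + \lambda)$ vanishes to order $d_{\chi,k}$ at $\lambda=0$. With $\lambda = s(s + 2(|\rho|-p))$, each factor thus contributes order $d_{\chi,k}$ when $p \ne |\rho|$ and order $2 d_{\chi,k}$ when $p = |\rho|$ (the shift is quadratic there); summing the signed contributions and reorganising via Poincar\'e duality for twisted cohomology yields the stated formula $\sum_{k=0}^{(d-1)/2}(d+1-2k)(-1)^k d_{\chi,k}$. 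I expect the main obstacle to be the second step: translating between the $\sigma_p$-bundle operators and the flat Hodge Laplacian while tracking the Agmon angles and sign conventions of the graded regularized determinants, without the benefit of a $\nabla_{\chi}$-invariant Hermitian metric on $E_{\chi}$ to simplify matters.
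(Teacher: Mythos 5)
Your overall strategy matches the paper's: start from the determinant formula of \cite[Proposition 7.9, case (a)]{spil2} (recalled as Proposition 5.1 here), rewrite it in terms of the flat Hodge Laplacians via Kuga's lemma, and then read off the order of vanishing at $s=0$ using Poincar\'e duality. Two steps, however, need correction.

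First, the combinatorial step converting the single $p$-product into the double $(k,p)$-product is stated backwards and too loosely. You claim ``each $\sigma_p$ appears, by Weyl's construction, in the $M$-decomposition of the exterior algebra with multiplicities over form degrees $k\le p$.'' In fact the branching rule says $i^{*}(r_p)=\sigma_p+\sigma_{p-1}$, so $\sigma_p$ occurs only in $\Lambda^p$ and $\Lambda^{p+1}$. What the proof actually needs is the \emph{inversion} of that relation: one defines the virtual $K$-representation $k_p:=\sum_{k=0}^{p}(-1)^{k}r_{p-k}\in R(K)$, which satisfies $\sigma_p=i^{*}(k_p)$, and then the operator $A^{\sharp}_{\chi}(\sigma_p)+(\lvert\rho\rvert-p)^{2}$ is identified with $-R(\Omega)$ acting on the virtual bundle $E_{k_p}\otimes E_{\chi}$. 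It is exactly this alternating sum over $r_{p-k}$, $k=0,\dots,p$, that produces the inner index, the grading of the determinant, and the shift $(s+\lvert\rho\rvert-p)^2-(\lvert\rho\rvert-p)^2=s(s+2(\lvert\rho\rvert-p))$. Without writing out this inversion your derivation of the double product is not a proof.

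Second, your sentence ``the generalized kernel of $\Delta^{\sharp}_{\chi,k}$ coincides with the twisted cohomology $H^{k}(X;E_{\chi})$ and has dimension $d_{\chi,k}$'' is wrong in exactly the regime this paper is about. For a non-unitary $\chi$ there is no Hodge isomorphism $\Ker(\Delta^{\sharp}_{\chi,k})\cong H^{k}(X,E_{\chi})$; this is precisely what the paper stresses in Remark 5.5, where it points out that one can have non-injective flat Laplacians associated with acyclic representations. The singularity count should be phrased purely in terms of $d_{\chi,k}=\dim\Ker(\Delta^{\sharp}_{\chi,k})$ (as the theorem does), with no appeal to cohomology. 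Your bookkeeping of the extra factor at $p=\lvert\rho\rvert=(d-1)/2$ and the reduction to $k\le(d-1)/2$ by Poincar\'e duality of the Laplacians (which does hold here, since $\Gamma$ intertwines $\Delta^{\sharp}_{\chi,k}$ and $\Delta^{\sharp}_{\chi,d-k}$) is otherwise the same as the paper's, which simply asserts that the order formula follows from Poincar\'e duality.
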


Let  $V \subset \Rep(\pi_{1}(X),\C^{n})$ be an 
open neighbourhood (in classical topology) of the set  $\Rep^{u}_{0}(\pi_{1}(X),\C^{n})$
of acyclic, unitary representations such that, for all $\chi\in 
V$, $B_{\chi}$ is bijective. Then,  for $\chi\in V$, we obtain the following result.

\begin{theorem}(\Cref{main})
Let $\chi\in V$. Then, the Ruelle zeta function $R(s;\chi)$ is regular at $s=0$ and is equal 
to the complex Cappell-Miller torsion, 
\begin{equation*}
R(0;\chi)=\tau_{\chi}.
\end{equation*}
\end{theorem}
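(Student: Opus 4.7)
The plan is to derive Theorem 2 directly from the determinant formula in Theorem 1 by specializing at $s=0$ and rearranging the resulting double product to match the Cappell-Miller torsion $\tau_{\chi}=\prod_{k=0}^{d}\det(B_{\chi,k}^{2})^{k(-1)^{k+1}}$. The hypothesis $\chi\in V$ is used to guarantee bijectivity of $B_{\chi}$, which both kills the potential singularity of $R(s;\chi)$ at $s=0$ and ensures that every regularized determinant entering the formula is a nonzero complex number.

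First I would verify regularity. Since $\chi\in V$, the operator $B_{\chi}$ is bijective, and therefore so is its square $B_{\chi,k}^{2}=\Delta^{\sharp}_{\chi,k}$ on each degree $k$; in particular all kernel dimensions $d_{\chi,k}=\dim\Ker(\Delta_{\chi,k})$ vanish. The order formula in Theorem 1 then yields
\begin{equation*}
\sum_{k=0}^{(d-1)/2}(d+1-2k)(-1)^{k}d_{\chi,k}=0,
\end{equation*}
so $R(s;\chi)$ is regular at $s=0$.

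Next I would evaluate Theorem 1 at $s=0$. The exponential factor specializes to $1$ and the shift $s(s+2(|\rho|-p))$ vanishes, leaving
\begin{equation*}
R(0;\chi)=\prod_{k=0}^{d-1}\prod_{p=k}^{d-1}{\det}_{\gr}\bigl(\Delta^{\sharp}_{\chi,k}\bigr)^{(-1)^{p}}.
\end{equation*}
The remaining task is algebraic: unpack the graded determinant ${\det}_{\gr}(\Delta^{\sharp}_{\chi,k})$ into ordinary $\zeta$-regularized determinants on each degree (as set up in \cite{spil2}), swap the order of the double product, and invoke the Hodge-type symmetry available on odd-dimensional manifolds to collapse the exponents of $\log\det(\Delta^{\sharp}_{\chi,k})$ into the Cappell-Miller weights $k(-1)^{k+1}$.

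The main obstacle is this last combinatorial and analytic step. One must carefully track the signs $(-1)^{p}$ from the outer product, the alternating sums implicit in ${\det}_{\gr}$, and the choice of Agmon angle used to define each non-self-adjoint regularized determinant, verifying that the resulting phases combine coherently into the prescribed expression for $\tau_{\chi}$. A useful consistency check is restriction to the unitary locus $\Rep^{u}_{0}(\pi_{1}(X),\C^{n})\subset V$, where $B_{\chi}$ is self-adjoint and the identification collapses to the Bunke-Olbrich equality between $R(0;\chi)$ and the squared Ray-Singer analytic torsion; together with the equality of the Cappell-Miller torsion and the squared refined analytic torsion on that locus, this confirms the combinatorial match, and if one wishes one can then propagate the identification to all of $V$ by holomorphicity in $\chi$.
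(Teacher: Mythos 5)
Your overall strategy agrees with the paper's: evaluate the determinant formula at $s=0$, use bijectivity of $B_{\chi}$ (equivalently injectivity of $\Delta^{\sharp}_{\chi,k}=B_{\chi,k}^{2}$) to kill the order of the singularity, and then match the surviving exponents against the Cappell--Miller weights $k(-1)^{k+1}$. The regularity part of your argument is correct and is precisely what the paper does.

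However, the combinatorial step you defer is the entire content of the theorem, and as you have set it up it would actually produce the wrong answer. First, $\Delta^{\sharp}_{\chi,k}$ is the Laplacian on $k$-forms, a single operator on an ungraded bundle, so the expression ${\det}_{\gr}\bigl(\Delta^{\sharp}_{\chi,k}\bigr)$ does not make sense; the graded determinant in Proposition 5.1 lives on $E(\sigma_{p})$, and it has already been unpacked by the time one reaches the $\prod_{k}\prod_{p}$ formula, where ordinary regularized determinants appear. Second, and more importantly, the exponent in that double product must be $(-1)^{k}$, not $(-1)^{p}$: expanding ${\det}_{\gr}(A^{\sharp}_{\chi}(\sigma_{p})+(s+|\rho|-p)^{2})$ via $\sigma_{p}=i^{*}(k_{p})$, $k_{p}=\sum_{j=0}^{p}(-1)^{j}r_{p-j}$, gives $\prod_{k=0}^{p}\det(\Delta^{\sharp}_{\chi,k}+s(s+2(|\rho|-p)))^{(-1)^{p-k}}$, and after multiplying by the outer $(-1)^{p}$ and interchanging the $(k,p)$ order, the exponent becomes $(-1)^{k}$. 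If instead you carry the $(-1)^{p}$ through, the inner sum $\sum_{p=k}^{d-1}(-1)^{p}$ equals $1$ for $k$ even and $0$ for $k$ odd, yielding $\prod_{k\ \text{even}}\det(\Delta^{\sharp}_{\chi,k})$, which is not $\tau_{\chi}$ (already for $d=3$ this disagrees). With the correct exponent, the inner product at $s=0$ contributes $(d-k)(-1)^{k}$, and Poincar\'{e} duality $\det(\Delta^{\sharp}_{\chi,k})=\det(\Delta^{\sharp}_{\chi,d-k})$ (together with $d$ odd) re-indexes this to $k(-1)^{k-1}$, which is precisely $\tau_{\chi}$. That is the calculation you must actually carry out. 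Your fallback of checking on $\Rep^{u}_{0}$ and propagating by holomorphicity is a valid alternative in principle, but it is an additional argument in its own right (one must verify holomorphicity of both sides on $V\setminus\Sigma$ and that $\Rep^{u}_{0}$ is a totally real locus of maximal dimension), and it is circuitous compared to the direct two-line index shift once the exponent is correctly identified.
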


\begin{corollary}(\Cref{maincoro})
 Let $\chi\in V$. Then, the Ruelle zeta function $R(s;\chi)$ is regular at $s=0$ and is related to the refined analytic torsion  $T_{\chi}$ by
 \begin{equation*}
  R(0;\chi)=T_{\chi}^{2} e^{2\pi i(\eta(B^{\ev}_{\chi})-\rank(E_{\chi})\eta_{\tr})}.
 \end{equation*}
\end{corollary}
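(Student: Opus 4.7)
The plan is straightforward: the Corollary follows by chaining Theorem~2 with the Cappell--Miller versus refined analytic torsion comparison recalled in the introduction (and proved in Section~9 as Proposition~9.4). Since $V$ is chosen so that every $\chi\in V$ satisfies the bijectivity of $B_{\chi}$, the hypotheses of Theorem~2 are automatically met. In particular, the regularity of $R(s;\chi)$ at $s=0$ is inherited verbatim from Theorem~2 and requires no further argument, so only the identification of $R(0;\chi)$ with the stated expression in $T_{\chi}$ remains.

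First I would invoke Theorem~2 to obtain $R(0;\chi)=\tau_{\chi}$, where $\tau_{\chi}$ is the complex-valued Cappell--Miller torsion associated with $\chi$. Next I would apply Proposition~9.4, which, for $\chi\in V$, identifies $\tau_{\chi}$ with the square of the refined analytic torsion twisted by an eta-invariant phase, namely
\begin{equation*}
\tau_{\chi}=T_{\chi}^{2}\,e^{2\pi i(\eta(B^{\ev}_{\chi})-\rank(E_{\chi})\eta_{\tr})}.
\end{equation*}
Substituting this into the identity from Theorem~2 gives
\begin{equation*}
R(0;\chi)=T_{\chi}^{2}\,e^{2\pi i(\eta(B^{\ev}_{\chi})-\rank(E_{\chi})\eta_{\tr})},
\end{equation*}
which is exactly the statement of the Corollary.

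Since both inputs are results already available in the paper, there is essentially no genuine obstacle at this stage: the Corollary is a one-line deduction. The real work sits upstream, in establishing Theorem~2 via the determinant formula of \cite{spil2} combined with a careful bookkeeping of the graded regularized determinants $\det_{\gr}(\Delta^{\sharp}_{\chi,k}+s(s+2(|\rho|-p)))$ at $s=0$, and in verifying Proposition~9.4, which amounts to unpacking the definitions of $T_{\chi}$ (graded determinant of $B^{\ev}_{\chi}$ weighted by the trivial-bundle eta phase) and of $\tau_{\chi}$ (alternating product of $\det(B^{2}_{\chi,k})$), and comparing the two via the standard relation between $\det(B^{2})$, $\det_{\theta}(B^{\ev})$, and the eta invariant of $B^{\ev}_{\chi}$. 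Once Theorem~2 and Proposition~9.4 are in hand, the Corollary is automatic.
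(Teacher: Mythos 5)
Your proposal is correct and matches the paper's own proof exactly: the Corollary is obtained by substituting the Cappell--Miller/refined-torsion comparison of Proposition~9.4 into the identity $R(0;\chi)=\tau_{\chi}$ from Theorem~2.
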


We mention here the work of  M\"uller in \cite{muller2021ruelle},
where the behaviour at the origin of the twisted Ruelle zeta function associated with an arbitrary non-unitary representation
is treated, and the relation to the Cappell-Miller torsion.
Also, Shen in \cite{Shen2021} studied the relation between the Cappell-Miller torsion and the value at the zero point of the Ruelle dynamical zeta function associated to a flat vector bundle, which is not necessarily unitarily flat, on a closed odd dimensional locally symmetric space of reductive type.

\paragraph{Organization of the paper}
This paper is organised  as follows. 
In Section 2, we provide the definition of the eta invariant and the
regularized determinant of a non-self-adjoint elliptic 
differential operator.
In Section 3,  we introduce the odd signature operator and
consider representations of the fundamental group, 
which are non-unitary, but closed enough, in classical topology, 
to acyclic and unitary representations. 
In Sections 4 and 5, we consider the refined analytic torsion, 
as it is introduced in \cite{BK2}. 
In Section 6, we consider the Cappell-Miller
torsion, as it is introduced in \cite{cm},
and its relation to the refined analytic torsion. 
In Section 7, we summarize basic facts about hyperbolic manifolds, 
and representations of the involved Lie groups. 
We define also the
twisted Selberg and Ruelle zeta functions.
In Section 8, we recall the definition of 
the twisted Laplace operator from \cite{M1}
and certain auxiliary operators,
first introduced in \cite{BO}.
In Section 9, we use the determinant formula 
from \cite{spilioti2020functional} to express the twisted Ruelle zeta function
in terms of products of graded regularized determinants of
twisted Hodge Laplacians on vector bundle-valued
differential forms. This formula is the keypoint to study the 
singularity of the twisted Ruelle zeta function at $s=0$.
Finally, in Section 10, we prove the main results: Theorem 2 and 
Corollary 1. We include also an Appendix, 
in Section 10, where we recall the general definition of the 
refined analytic torsion as an element of the determinant line
from \cite{BK3}.

\paragraph{Acknowledgements}
The author would like to thank  Werner M\"uller for his helpful suggestions and comments,
as well as Maxim Braverman for the insightful discussions. The author wishes to acknowledge also
the hospitality of the Department of Mathematics of the University of T\"ubingen, 
Aarhus University and University of G\"ottingen. This work was partially funded by
a research grant from the Villum Foundation (Grant No. 00025373) and 
the Research Training Group RTG 2491-Fourier Analysis and Spectral Theory.

\section{Preliminaries on the Eta invariant of a non self-adjoint operator and graded regularized determinants}

\subsection{Eta invariant of an elliptic operator}

Let $E\rightarrow X $ be a complex vector bundle over a smooth, compact, Riemannian manifold X of dimension d. Let $D\colon C^{\infty}(X, E)\rightarrow C^{\infty}(X, E)$ be an elliptic differential operator of order $m\geq 1$. Let $\sigma(D)$ be its principal symbol.

\begin{defi}
Let  $R_{\theta}:=\{\rho e^ {i\theta}: \rho\in[0,\infty]\}$. 
The angle $\theta\in[0,2\pi)$ is a principal angle for $D$ if 
\begin{equation*}
 \spec(\sigma_{D}(x,\xi))\cap R_{\theta}=\emptyset,\quad \forall x\in X,\forall\xi\in T_{x}^{*}X,\xi\neq 0.
\end{equation*}
\end{defi}
\begin{defi}
Let $I\subset [0,2\pi)$.
Let $L_{I}$ be a solid angle defined by
\begin{equation*}
 L_{I}:=\{\rho e^ {i\theta}: \rho\in(0,\infty),\theta\in I\}.
\end{equation*}
The angle $\theta$ is an Agmon angle for $D$, if it is a principal angle for $D$ and there exists an $\varepsilon>0$ such that 
\begin{equation*}
 \spec(D)\cap L_{[\theta-\varepsilon,\theta+\varepsilon]}=\emptyset.
\end{equation*}
\end{defi}
We define here the eta function associated with non-self-adjoint operators  $D$ with elliptic, self-adjoint principal symbol (see \cite{Gilkeysoviet}).
By \cite[\S I.6]{Mk}, the space $L^{2}(X,E)$ of square integrable sections 
of $E$ is the closure of the algebraic direct sum of finite dimensional $D$-invariant subspaces
\begin{equation*}
L^{2}(X,E)=\overline{\bigoplus\limits_{k\geq 1} \Lambda_{k}},
\end{equation*}
such that the restriction of $D$ to $\Lambda_{k}$
has a unique eigenvalue $\lambda_{k}$
and $\lim_{k\rightarrow \infty}\vert \lambda_{k}\vert=\infty$.
In general, the above sum is not a sum of mutually orthogonal subspaces. 
The space $\Lambda_{k}$ is called the space of root vectors 
of $D$ with eigenvalue 
$\lambda_{k}$.  
The algebraic multiplicity $m_{k}$ of the the eigenvalue $\lambda_{k}$ is defined as the  the dimension of the space $\Lambda_{k}$.
Since the principal symbol of $D$ is self-adjoint,  the angles 
$\pm \pi /2$ are principal angles for $D$. 
Hence, $D$ also possesses an  Agmon angle (see \cite[Section 3.10]{BK2}). Let $\theta$ be an Agmon angle for $D$. Denote by $\log_{\theta}\lambda_{k}$ the branch of the logarithm in $\C\setminus R_{\theta}$ with  $\theta<\im(\log_{\theta}\lambda_{k})<\theta+2\pi$. Let $(\lambda_{k})_{\theta}:=e^{\log_{\theta}\lambda_{k}}$.
\begin{defi}
For Re$(s)\gg0$, we define the eta function $\eta_{\theta}(s,D)$ of $D$ by 
\begin{equation*}
 \eta_{\theta}(s,D):=\sum_{\text{\Re}(\lambda_{k})>0} m_{k}(\lambda_{k})_{\theta}^{-s}-\sum_{\text{\Re}(\lambda_{k})<0} m_{k}(-\lambda_{k})_{\theta}^{-s}.
\end{equation*}
\end{defi}
Note that since the angles $\pm \pi/2$ are principal angles for $D$, there are at most finitely many eigenvalues of $D$ on or near the imaginary axis.
Hence, the eigenvalues  of $D$ that are purely imaginary do not
contribute to the definition of the eta function. 
It has been shown by Grubb and Seeley (\cite[Theorem 2.7]{GS}) that $\eta_{\theta}(s,D)$ has a meromorphic continuation to the whole complex plane $\C$ with isolated simple poles and that is regular at $s=0$. Moreover, the number $\eta_{\theta}(0,D)$ is independent of the Agmon angle $\theta$. Hence, we write $\eta(0,D)$ instead of $\eta_{\theta}(0,D)$.
\begin{defi}
 Let $m_{+}$, respectively $m_{-}$, denote the the number of the eigenvalues of $D$
on the positive, respectively negative, part of the imaginary axis.
We define the eta invariant $\eta(D)$ of the operator $D$ by 
\begin{equation*}
\eta(D)=\frac{\eta(0,D)+m_{+}-m_{-}}{2}.
\end{equation*}
\end{defi}
By \cite[(3.25)]{BK2}, $\eta(D)$ is independent of the Agmon angle $\theta$.

\subsection{Regularized determinant of an elliptic operator}
We recall here the definition  of the regularized determinant of an elliptic operator.
For more details we refer the reader to \cite[Subsection 3.5 and Definition 3.6]{BK2}.
Let $X,E$ and $D$ be as in the previous subsection.
We assume, in addition, that $D$ has a self-adjoint principal symbol and 
it is invertible. Let $\theta$ be an Agmon angle for $D$.
We define the zeta function $\zeta_{\theta}(s,D)$ by
\begin{equation*}
\zeta_{\theta}(s,D):=\Tr(D_{\theta}^{-s}),\quad \text{Re}(s)>\frac{\dim(X)}{m},
\end{equation*}
where $D_{\theta}^{-s}$ is a pseudo-differential operator with continuous kernel
(see \cite[(3.15)]{BK2}).  The zeta function $\zeta_{\theta}(s,D)$ admits a meromorphic continuation to 
$\C$ and it is regular at zero (\cite{seeley1967complex}).
We define the regularized determinant of $D$ by
\begin{equation*}
{\det}_{\theta}:=\exp\bigg( -\frac{d}{ds}\bigg\vert_{s=0} \zeta_{\theta}(s,D) \bigg).
\end{equation*}
We denote by $L{\det}_{\theta}$ the particular value of the logarithm of the determinant, such that
\begin{equation*}
 L{\det}_{\theta}(D)=-\zeta_{\theta}'(0,D).
\end{equation*}
By \cite[Subsection 3.10]{BK2}, the regularized determinant ${\det}_{\theta}(D)$
is independent of the Agmon angle $\theta$.

\subsection{Graded regularized determinant of an elliptic operator}
We recall here the notion of the graded regularized determinant of an elliptic differential operator.
Let $E=E_{+}\oplus E_{-}$ be a $\Z_{2}$-graded  vector bundle over a compact,
Riemannian manifold $X$. Let $D\colon C^{\infty}(X,E)\rightarrow C^{\infty}(X,E)$ be an elliptic differential operator,
which is bounded from below. We assume that $D$ preserves the grading, i.e.,
we assume that with respect to the decomposition
\begin{equation*}
C^{\infty}(X,E)=C^{\infty}(X,E^{+})\oplus C^{\infty}(X,E^{-}),
\end{equation*}
$D$ takes the form,
\begin{equation*}
   D=
  \left( {\begin{array}{cc}
   D{+} & 0 \\
   0 &    D_{-} \\
  \end{array} } \right).
\end{equation*}
Then, the graded determinant $\det_{\gr}(D)$ of $D$ is defined by
\begin{equation*}
{\det}_{\gr}(D)=\frac{\det(D_{+})}{\det(D_{-})},
\end{equation*}
where $\det(D_{+})$ and $ \det(D_{-})$ denote the 
regularized determinants of the operators $D_{+}$ and 
$D_{-}$, respectively.

\section{Spaces of representations of the fundamental group}
\subsection{Odd signature operator}

Following the idea of Braverman and Kappeler in \cite{BK2}, we consider representations of $\Gamma$, which are non-unitary, but belong to a neighbourhood of the set of acyclic, unitary representations in a suitable topology, such that the odd signature operator $B$
(see Definition 3.1.1 below) is bijective. 
For these representations, Braverman and Kappeler defined the refined analytic torsion for a compact, oriented, 
odd-dimensional manifold $X$. Having applications in mind,  instead of considering the regularized determinant of the (flat) Laplacian  they consider the graded determinant of the even part of 
its square root, the odd signature operator $B$.
%a Dirac-type operator. 
We recall here some definitions from \cite{BK2}.

Let $X$ be a compact, oriented, Riemannian manifold of odd dimension $d=2r-1$. Denote by $g$ the Riemannian metric on $X$. Let $E\rightarrow X$ be a complex vector bundle over $X$, endowed with a flat connection $\nabla$. Let $\Lambda^{k}(X,E)$ be the space of $k$-differential forms on $X$ with values in $E$. 
We denote by $\nabla$ the induced differential 
$\nabla\colon \Lambda^{k}(X,E)\rightarrow \Lambda^{k+1}(X,E)$.
Let $\Gamma\colon \Lambda^{k}(X,E) \rightarrow\Lambda^{d-k}(X,E)$ be the chirality operator defined by the formula
\begin{equation*}
 \Gamma\omega:=i^{(d+1)/2}(-1)^{k(k+1)/2}\ast' \omega,
\end{equation*}
where $\ast'$ denotes the operator acting on sections of $\Lambda^{k}T^{\ast}X\otimes E$ as $\ast\otimes \Id$,
and $\ast$ is the usual Hodge $\ast$-operator.
\begin{defi}\label{oddsign}
We define the odd signature operator $B=B(\nabla, g)$ acting on $\Lambda^{*}(X,E)$ by 
\begin{equation}\label{odd}
 B:=\Gamma \nabla+\nabla \Gamma.
\end{equation} 
\end{defi}
We set
\begin{equation*}
 \Lambda^{\ev}(X,E):=\bigoplus_{p=0}^{r-1} \Lambda^{2p}(X,E).
\end{equation*}
Let $B^{\ev}$ be the even part of the odd signature operator $B$, defined by 
$B^{\ev}:=B\colon \Lambda^{\ev}(X,E)\rightarrow\Lambda^{\ev}(X,E)$.

\subsection{Spaces of connections}

We consider the following two assumptions.\\
\newline
\textbf{Assumption 1.}
The connection $\nabla$ is acyclic, i.e.,
the twisted de Rham complex
\begin{equation*}
0\rightarrow \Lambda^{0}(X,E)
\overset{\nabla}{\longrightarrow} \Lambda^{1}(X,E)
\overset{\nabla}{\longrightarrow}\cdots 
\overset{\nabla}{\longrightarrow}\Lambda^{d}(X,E)
\rightarrow 0
\end{equation*}
is acyclic,
\begin{equation*}
\im(\nabla|_{\Lambda^{k-1}(X,E)})=\Ker(\nabla|_{\Lambda^{k}(X,E)}), 
\end{equation*}
for every $k=1,\ldots,d$.
\newline

\textbf{Assumption 2.}
The odd signature operator $B\colon \Lambda^{k}(X,E)\rightarrow \Lambda^{k}(X,E)$ is bijective.\\

Suppose that there exists a Hermitian metric $h$ on $E$, which is preserved by $\nabla$. 
In such a case, we call $\nabla$ a Hermitian connection.
For such connections, one can easily see that Assumption 1 implies Assumption 2 and vice versa (see \cite[Subsection 6.6]{BK2}).
Hence, all acyclic, Hermitian connections satisfy Assumptions 1 and 2.

Following \cite{BK2}, we denote by $\Lambda^{1}(X,\End(E))$ be the space of $1$-forms with values in $\End(E)$. We define the sup-norm on $\Lambda^{1}(X,\End(E))$  by 
\begin{equation*}
 \lVert\omega\rVert_{\sup}:= \max_{x\in X}\lvert \omega(x)\rvert,
\end{equation*}
where the norm $\lvert\cdot \rvert$ is induced by a Hermitian metric on $E$ and a Riemannian metric on $X$. The topology defined by this norm is called the $C^{0}$-topology and it is independent of the metrics.
We identify the space of connections on $E$ 
with $\Lambda^{1}(X,\End(E))$,
by choosing a connection $\nabla_{0}$ and
associating to a connection $\nabla$ the $1$-form 
$\nabla-\nabla_{0}\in\Lambda^{1}(X,\End(E))$.
Hence, by this identification, 
the $C^{0}$-topology on $\Lambda^{1}(X,\End(E))$
provides a topology on 
the space of connections on $E$, which 
is independent of the choice of $\nabla_{0}$. 
This topology is called the $C^{0}$-topology on the space 
of connections.

Let $\Flat(E)$ be the set of flat connections on $E$ and $\Flat'(E,g)\subset \Flat(E)$ be the set of  
flat connections on $E$, 
satisfying Assumptions 1 and 2.
The topology induced on these sets by the $C^{0}$-topology on 
the space of connections on $E$
is also called the $C^{0}$-topology.
The following proposition is proved in \cite{BK2}.
\begin{prop}
 $\Flat'(E,g)$ is an $C^{0}$-open subset of $\Flat(E)$, which contains 
 all acyclic Hermitian connections on $E$.
\end{prop}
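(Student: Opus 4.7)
The plan is to observe that, on the space of flat connections, Assumption 2 already implies Assumption 1, so that $\Flat'(E,g)$ coincides with the locus of flat connections for which the odd signature operator is bijective. Once this reduction is in place, the containment statement follows from Hodge theory, while the $C^{0}$-openness statement reduces to standard perturbation theory for invertible Fredholm operators.

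To justify the reduction, I would first check the commutation identity $B\nabla = \nabla B$ on $\Lambda^{*}(X,E)$: flatness gives $\nabla^{2}=0$, so both compositions collapse to $\nabla\Gamma\nabla$. Assuming that $B$ is bijective, the commutation then forces $B$ to preserve $\Ker\nabla$ and to restrict there to a bijection, since if $\eta$ satisfies $B\eta \in \Ker\nabla$ then $B(\nabla\eta) = \nabla B\eta = 0$ and global bijectivity of $B$ yields $\nabla\eta = 0$. For any closed $\omega$, I would then pick the unique $\eta \in \Ker\nabla$ with $B\eta = \omega$; since $\nabla\eta = 0$, the computation
\begin{equation*}
\omega = B\eta = \Gamma\nabla\eta + \nabla\Gamma\eta = \nabla(\Gamma\eta)
\end{equation*}
exhibits $\omega$ as exact, so Assumption 1 holds.

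For the containment of acyclic Hermitian connections, I would invoke standard Hodge theory: if $\nabla$ is Hermitian then $B$ is formally self-adjoint and elliptic of index zero, so acyclicity is equivalent to $\Ker B^{2} = \Ker B = 0$, which in turn is equivalent to bijectivity. For $C^{0}$-openness, I would fix $\nabla_{0} \in \Flat'(E,g)$ and write a nearby flat connection as $\nabla = \nabla_{0} + \alpha$, with $\alpha \in \Lambda^{1}(X,\End(E))$. The difference $B(\nabla,g) - B(\nabla_{0},g)$ is a zeroth-order operator whose $L^{2}$-operator norm is controlled by a constant times $\|\alpha\|_{\sup}$. Since $B(\nabla_{0},g)\colon H^{1} \to L^{2}$ is bijective with bounded inverse, a Neumann-series argument would give bijectivity of $B(\nabla,g)$ for all sufficiently small $\|\alpha\|_{\sup}$, and by the reduction $\nabla$ then lies in $\Flat'(E,g)$.

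The main delicate step will be the reduction itself, where flatness enters crucially through $[\nabla,B]=0$: without that commutation, Assumption 2 would not automatically produce acyclicity, and one would have to treat Assumption 1 by a separate argument such as upper semicontinuity of cohomology dimensions under $C^{0}$-continuous families of flat connections.
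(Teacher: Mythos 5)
Your argument is correct, and it tracks the approach in the cited reference (the paper itself simply points to Proposition 6.8 of Braverman--Kappeler, giving no independent proof). The reduction you identify --- that on flat connections the commutation $B\nabla = \nabla\Gamma\nabla = \nabla B$ forces Assumption~2 to imply Assumption~1 --- is the right structural observation: it means $\Flat'(E,g)$ is cut out by the single open condition that $B$ be invertible, so openness follows from the perturbation estimate you describe. The three pieces of your argument (commutation reduction, the Hermitian containment via $B^{2}=\Delta$ and $\Ker B = \Ker B^{2}$, and the Neumann-series perturbation of a first-order elliptic isomorphism) are exactly what is needed, and the alternative route you mention at the end, upper semicontinuity of twisted Betti numbers, is indeed unnecessary once the reduction is in place. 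One small bookkeeping point worth making explicit if you were writing this up: in the reduction step, $\Gamma\eta$ is a priori a sum over several degrees, but since $\nabla(\Gamma\eta)=\omega$ is pure of degree $k$, only the degree-$(k-1)$ component of $\Gamma\eta$ contributes, and that component alone exhibits $\omega$ as exact in the correct degree. Similarly, for the Neumann series one should compose on the correct side, factoring $B(\nabla,g)=\bigl(\Id + R\,B(\nabla_{0},g)^{-1}\bigr)B(\nabla_{0},g)$ with $R$ the zeroth-order difference, so that the perturbing factor acts $L^{2}\to L^{2}$ and its norm is controlled purely by $\lVert\alpha\rVert_{\sup}$; elliptic regularity then upgrades bijectivity $H^{1}\to L^{2}$ to bijectivity on smooth sections as required by Assumption~2.
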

\begin{proof}
 See \cite[Proposition 6.8]{BK2}.
\end{proof}

\subsection{Spaces of representations}\label{Vnei}

Let \(\Rep(\pi_{1}(X),\C^{n})\) be the space of all $n$-dimensional, complex representations of $\pi_{1}(X)$. 
This space has a natural structure of a complex algebraic variety.
Let $\{\gamma_{1},\ldots,\gamma_{L}\}$ be a finite set of generators of $\pi_{1}(X)$. 
The elements $\gamma_{i}$ satisfy finitely many relations. Then, a representation $\chi\in \Rep(\pi_{1}(X),\C^{n})$
is given by $2L$ invertible $n\times n$-matrices
$\chi(\gamma_{1}),\ldots,\chi(\gamma_{L}),\chi(\gamma_{1}^{-1}),\ldots,\chi(\gamma_{L}^{-1})$, which satisfy
finitely many polynomial equations. 
Hence, we  view $\Rep(\pi_{1}(X),\C^{n})$
as an algebraic subset of $\Mat_{n\times n}(\C)^{2L}$ with the induced topology.

For a representation $\chi\in \Rep(\pi_{1}(X),\C^{n})$, we consider the associated flat vector bundle
$E_{\chi}\rightarrow X$.  Let $\nabla_{\chi}$
be the flat connection on $E_{\chi}$. 
Then, the monodromy of $\nabla_{\chi}$ is isomorphic to $\chi$.
We denote also by $\nabla_{\chi}$ the induced differential 
$\nabla_{\chi}\colon\Lambda^{*}(X,E_{\chi})\rightarrow\Lambda^{*+1}(X,E_{\chi})$.
We denote by $B_{\chi}$ the odd signature operator $B_{\chi}=B(\nabla_{\chi},g)$ acting on $\Lambda^{*}(X,E_{\chi})$.
Let $B_{\chi}^{\ev}$ be the restriction of $B_{\chi}$ to $\Lambda^{\ev}(X,E_{\chi})$.

Let $\Rep_{0}(\pi_{1}(X),\C^{n})$ be the set of all acyclic representations of $\pi_{1}(X)$, i.e., 
the set of all representations $\chi$ such that $\nabla_{\chi}$ is acyclic. 
Let $\Rep^{u}(\pi_{1}(X),\C^{n})$ be the set of all unitary representations of $\pi_{1}(X)$, i.e., 
the set of all representations $\chi$ such that there exists a
Hermitian scalar product $(\cdot,\cdot)$ on $\C^{n}$, 
which is preserved by the matrices $\chi(\gamma)$, for all $\gamma\in \pi_{1}(X)$. 
$\Rep^{u}(\pi_{1}(X),\C^{n})$ can be viewed as the real locus of the complex algebraic 
variety $\Rep(\pi_{1}(X),\C^{n})$. We set
\begin{equation*}
 \Rep^{u}_{0}(\pi_{1}(X),\C^{n}):=\Rep_{0}(\pi_{1}(X),\C^{n})\cap \Rep^{u}(\pi_{1}(X),\C^{n}).
\end{equation*}
Suppose that for some representation $\chi_{0}\in \Rep_{0}(\pi_{1}(X),\C^{n})$  the  operator $B_{\chi_{0}}$ bijective . Then, there exists an open neighbourhood (in classical topology) $V \subset \Rep(\pi_{1}(X),\C^{n})$ of the set  $\Rep^{u}_{0}(\pi_{1}(X),\C^{n})$
of acyclic, unitary representations such that, for all $\chi\in 
V$, the pair $(\nabla_{\chi},g)$ satisfies Assumptions 1 and 2 of Subsection 6.2
(see \cite[Subsection 13.7]{BK2}).

\section{Refined analytic torsion}
\subsection{Definition}
Let $\nabla$ be acyclic. 
Let $\cal{M}(\nabla)$ be the set of all Riemannian metrics $g$ on $X$, 
such that $B^{\ev}=B^{\ev}(\nabla,g)$ is bijective.
Then, by \cite[Proposition 6.8]{BK2}, $\cal{M}(\nabla)\neq \emptyset$, for all flat connections in an open neighbourhood, in $C^{0}$-topology, of the set of acyclic Hermitian connections.
Let $V$ be an open neighbourhood of  $\Rep^{u}_{0}(\pi_{1}(X),\C^{n})$ as in Subsection 3.3.

\begin{defi}\label{defref}
 Let $\chi\in V$. We define the refined analytic torsion $T_{\chi}=T(\nabla_{\chi})$ by
\begin{equation}\label{ref}
T_{\chi}:={\det}_{\gr,\theta}(B_{\chi}^{\ev})e^{i\pi \rank(E_{\chi})\eta_{tr}}\in \C\backslash\{0\},
\end{equation}
where $\eta_{\tr}=\eta_{tr}(g)=\frac{1}{2}\eta (0, B_{tr}(g))$, and $\eta (0, B_{tr}(g))$ denotes the eta invariant of the even part of the odd signature operator corresponding to the trivial line bundle, endowed with the trivial connection.
The refined analytic torsion is independent of the Agmon angle $\theta\in(-\pi,0)$ for the operator
 $B_{\chi}^{\ev}=B_{\chi}^{\ev}(g)$ and the Riemannian metric $g\in \cal{M}(\nabla)$.
\end{defi}

\begin{rmrk}
The graded determinant of the operator $B_{\chi}^{\ev}$ is 
slightly different  than the usual expression of the graded determinant
as in Subsection 2.2.3. We recall here briefly the definition of 
${\det}_{\gr}(B_{\chi}^{\ev})$. For more details, we refer
the reader to \cite[Subsection 2.2 and 6.11]{BK2}.
We set
\begin{align*}
&\Lambda^{k}_{+}(X,E):=\Ker(\nabla\Gamma)\cap\Lambda^{k}(X,E)\\
&\Lambda^{k}_{-}(X,E):=\Ker(\Gamma\nabla)\cap\Lambda^{k}(X,E).
\end{align*}
Assumption 2 in Subsection 3.2 implies that $\Lambda^{k}(X,E)=\Lambda^{k}_{+}(X,E)\oplus\Lambda^{k}_{-}(X,E)$
(see \cite[Subsection 6.9]{BK2}). 
We set 
\begin{equation*}
\Lambda^{\ev}_{\pm}(X,E)=\bigoplus_{p=0}^{r-1} \Lambda^{2p}_{\pm}(X,E).
\end{equation*}
We denote by $B_{\chi,\pm}^{\ev}$ the restriction of
$B_{\chi}^{\ev}$ to $\Lambda^{\ev}_{\pm}(X,E)$. Then, $B_{\chi}^{\ev}$ 
leaves the subspaces $\Lambda^{\ev}_{\pm}(X,E)$ invariant. 
By Assumption 2, the operators $B_{\chi,\pm}^{\ev}$ are bijective. 
Hence, we define the graded determinant ${\det}_{\gr}(B_{\chi}^{\ev})\in\C
\backslash\{0\}$ of $B_{\chi}^{\ev}$ by
\begin{equation*}
{\det}_{\gr,\theta}(B_{\chi}^{\ev}):=\frac{\det_{\theta}(B_{\chi,+}^{\ev})}
{\det_{\theta}(-B_{\chi,-}^{\ev})},
\end{equation*}
where $\theta$ is an Agmon angle for $B_{\chi}^{\ev}$
(and an Agmon angle for $B_{\chi,\pm}^{\ev}$ as well).
\end{rmrk}
\begin{rmrk}
By \cite[Corollary 13.11.(2)]{BK2}, the refined analytic torsion $T_{\chi}$ is an holomorphic function on the set $V\backslash \Sigma$, where $\Sigma$ denotes the set of singular points of the complex algebraic variety $\Rep(\pi_{1}(X),\C^{n})$.
\end{rmrk}

\subsection{Independence of the refined analytic torsion of the Riemannian metric and the Agmon angle}

The independence of $T_{\chi}$ of the Agmon angle $\theta$ and the Riemannian metric $g$ is proved in \cite[Theorem 9.3]{BK2}. 
We set
\begin{equation}\label{xi}
 \xi=\xi(\chi,g,\theta):=\frac{1}{2}\sum_{k=0}^{d}(-1)^{k+1}k L{\det}_{2\theta}(B_{\chi}^{2}|_{\Lambda^{k}(X,E_{\chi})}).
\end{equation}
The number $\xi$ is defined in \cite[(7.79)]{BK2} (see also (8.102) in the same paper). 
Let  $\eta=\eta(\chi,g)=\eta(B^{\ev}_{\chi})$ denote the eta invariant of the even part of the odd signature operator 
$B_{\chi}$.
By \cite[Theorem 7.2]{BK2}, for a suitable choice of an Agmon angle for $B_{\chi}$, the following equality holds
\begin{equation}\label{detcrucial}
 {\det}_{\gr,\theta}(B_{\chi}^{\ev})=e^{\xi(\chi,g,\theta)}e^{-i\pi\eta(\chi,g)}.
\end{equation}
The graded determinant ${\det}_{\gr}(B_{\chi}^{\ev})$ above 
does depend on the Riemannian metric $g$. This is the reason why the additional exponential $e^{i\pi \rank(E_{\chi})\eta_{tr}(g)}$ is considered in the definition of the refined analytic torsion in \cite{BK2}, to remove the metric anomaly. By results in 
\cite[p. 52]{Gilkeysoviet}, modulo $\Z$, the difference 
\begin{equation*}
 \eta(\chi,g)-\rank(E_{\chi})\eta_{tr}(g)
\end{equation*}
is independent of $g$. In addition, by \cite[Proposition 9.7]{BK2}, 
for $g_{1},g_{2}\in\mathcal{M}(\nabla)$ and suitable choices of  Agmon angles $\theta_{0},\theta_{1}$,
\begin{equation*}
 \xi(\chi,g_{1},\theta_{1})=\xi(\chi,g_{2},\theta_{2})\mod\pi i.
\end{equation*}
Hence, for different choices of Agmon angles and Riemannian metrics, the corresponding expressions in 
\eqref{ref} coincide up to a sign. To see that the two expressions coincide see \cite[p. 238]{BK2}.

\begin{rmrk}
\begin{equation*}
\rho_{\chi}:=\eta(\chi,g)-\rank(E_{\chi})\eta_{tr}(g),
\end{equation*}
is called the $rho$ invariant $\rho_{\chi}$ of the operator $B_{\chi}^{\ev}$.
\end{rmrk}

\section{Alternative definition of the refined analytic torsion}

In  \cite[Section 11]{BK2}, an alternative definition of the refined analytic torsion is given,
which involves the Hirzebruch  $L$-polynomial and the signature theorem by Atiyah, Patodi and Singer. We recall this definition here.

Let $N$ be a smooth, oriented, compact, even-dimensional  manifold such that $\partial{N}=X$.  
Recall that the signature $\sign(N)$ is an integer defined in pure cohomological terms
(see \cite[p. 65]{ATP} and \cite[p. 407]{atiyah1975spectral}), 
as the signature of the Hermitian form, induced by 
the cup product,  in the middle ($\dim N/2$)-cohomology,
and it is metric independent.
We denote by $L(p)$ the Hirzebruch $L$-polynomial 
in the Prontrjagin forms of a Riemannian metric of a manifold (\cite[p. 228, 232]{LM}).
The signature theorem (\cite[Theorem 4.14]{ATP} , \cite[Theorem 2.2]{atiyah1975spectral}) states
\begin{equation}\label{sign}
\sign (N)=\int_{N}L(p)-\eta(B_{trivial}),
\end{equation}
where   $L_{N}(p):=L(p)$ denotes the Hirzebruch $L$-polynomial 
in the Prontrjagin forms of a Riemannian metric on $N$ which is a product near $X$.
By the metric independence of $\sign (N)$,
\eqref{sign} and Proposition 9.5 in \cite{BK2}, 
we have that modulo $\Z$,
\begin{equation*}
\eta-\rank (E) \int_{N}L(p)
\end{equation*}
is independent of the Riemannian metric on $X$.

In general, there might be no smooth, oriented manifold $N$ such that
$\partial N=X$. However, since $\dim X$ is odd, there exists an oriented manifold 
$N$, whose oriented boundary is the disjoint union of two copies of $X$, with the same orientation (\cite{wall1960determination}, \cite[Th. IV.6.5]{rudyak1998thom}).
Then, as discussed above, modulo $\Z$,
\begin{equation*}
\eta-\frac{\rank (E)}{2} \int_{N}L(p).
\end{equation*}
is independent of the Riemannian metric on $X$.

We assume that the flat connection $\nabla$ on $E$ belongs to an open 
neighbourhood, in $C^{0}$-topology, of the set of acyclic Hermitian connections 
(see Subsection 3.2).
\begin{defi}
Let $\theta\in (-\pi, 0)$ be the Agmon angle of $B^{\ev}$.  Choose  a smooth, compact, oriented manifold $N$,  whose oriented boundary is diffeomorphic to two disjoint copies of $M$. The refined analytic torsion $T'(\nabla)$ is defined by
\begin{equation}\label{altrefdef}
T'(\nabla)= T'(X, E, \nabla, N):= {\det}_{\gr,\theta}(B^{\ev})e^{i \pi\frac{\rank (E)}{2} \int_{N}L(p)},
\end{equation}
where where  $L_{N}(p):=L(p)$ denotes the Hirzebruch $L$-polynomial 
in the Prontrjagin forms of a Riemannian metric on $N$ which is a product near $\partial N$.
\end{defi}

By the discussion above and the same arguments as 
in the proof of
 \cite[Theorem 9.3]{BK2}, $T'(\nabla)$
is independent the Agmon angle $\theta$ and
of the Riemannian metric on $X$. 
However, $T'(\nabla)$ does depend on the choice of $N$. 
Since for different choices of $N$, $\int_{N}L(p)$  differs by an integer,
$T'(\nabla)$ is independent of the choice of $N$ up to a 
multiplication by $i^{k\rank (E)}$, $k\in \Z$.
Hence, if $\rank (E)$ is even, $T'(\nabla)$ is well defined up to a sign, and if $\rank(E)$
is divisible by $4$, it is a well defined complex number.
In addition, by \eqref{altrefdef} and \cite[Theorem 7.2]{BK2}, we have
\begin{equation}\label{altref}
T'(\nabla)=  e^{\xi}e^{-i\pi \eta}e^{i \pi\frac{\rank (E)}{2} \int_{N}L(p)},
\end{equation}
where $\xi$ is as in \eqref{xi}.

\section{Cappell-Miller complex torsion}

In \cite{cm}, Cappell and Miller introduced the Cappell-Miller torsion $\tau$,
an invariant, which is an element of $\det(H^{*}(X,E))\otimes \det(H^{*}(X,E))$.
To define this invariant, they used the flat Laplacian on vector bundle-valued differential forms. 
This operator is non-self-adjoint and in terms of the chirality operator in the previous sections is the square $B^{2}$ of the odd signature operator $B$. Hence, roughly speaking, Braverman and Kappeler  used the odd signature operator to define the refined analytic torsion and Cappell and Miller its square.
By \cite[eq. (5.1)]{br2008cano},
for a finite-dimensional complex $(C^{*},\partial)$, 
\begin{equation*}
 \tau=\tau_{\Gamma}:=\rho_{\Gamma}\otimes\rho_{\Gamma}\in\det(H^{*}(\partial))
 \otimes\det(H^{*}(\partial)),
\end{equation*}
where $\rho_{\Gamma}\in\det(H^{*}(\partial))$ is 
as in Definition 11.1.1 (Appendix). If the complex $(C^{*},\partial)$ is acyclic then $\tau_{\Gamma}$ is a complex number and by \cite[eq. (5.3)]{br2008cano},
\begin{equation}\label{cm}
  \tau_{\Gamma}:=\prod_{k=0}^{d} \det(B^{2}|_{C^{k}})^{k(-1)^{k+1}}
\end{equation}
\begin{rmrk}
As explained in  \cite[Remark 5.2]{br2008cano}, the element $\tau$ 
in \cite{cm} is constructed slightly different. In particular, by the construction of the element $\rho_{\Gamma}$
and the construction in \cite[Section 6]{cm},
the two elements coincide up to a sign. One can see that the signs agree by \eqref{cm} and equation (6.18) in \cite{cm}.
\end{rmrk}

In the manifold setting, we consider the Cappell-Miller torsion $\tau_{\Gamma_{[0,\lambda]}}\in\det(H^{*}_{[0,\lambda]}(X,E))\otimes\det(H^{*}_{[0,\lambda]}(X,E)) $ of the finite-dimensional complex  $\Lambda^{k}_{[0,\lambda]}(X,E)$
(see  Appendix, Subsection 11.2).

\begin{defi}
Let $\theta\in (0,2\pi)$ be an Agmon angle for the  operator $B^{2}_{(\lambda,\infty)}$. Then, the Cappell-Miller torsion  $\tau_{\nabla}$ is defined by
\begin{equation*}
 \tau_{\nabla}:= \tau_{\Gamma_{[0,\lambda]}}\prod_{k=0}^{d}{\det}_{\theta}(B^{2}|_{\Lambda^{k}_{(\lambda,\infty)}(X,E)})^{k(-1)^{k+1}}\in \det(H^{*}(X,E))\otimes\det(H^{*}(X,E)). 
\end{equation*}
\end{defi}
By \cite[Theorem 8.3]{cm}, the Cappell-Miller torsion $\tau_{\nabla}$ is independent of the 
choice of $\lambda>0$ and the choice of the Riemannian metric on $X$.
Moreover, since the principal symbol of $B^{2}$ is self-adjoint, 
by \cite[Section 3.10]{BK2}, the regularized determinant 
${\det}_{\theta}(B^{2}|_{\Lambda^{k}_{(\lambda,\infty)}(X,E)})$
is independent of the choice of the Agmon angle $\theta$.
\begin{rmrk}
 In the presence of the chirality operator $\Gamma$, the dual differential $\partial^{*, \sharp}$, introduced in \cite{cm} is  given by $\partial^{*, \sharp}=\Gamma \partial \Gamma$. Then, 
 $(\partial^{*, \sharp})^{2}=0$ and one obtains a complex 
 $(\Lambda^{*}(X,E),\partial^{*, \sharp})$ of degree $-1$.
 In addition, the flat Laplacian $\Delta^{\sharp}$ acting on $\Lambda^{*}(X,E)$ is given by
\begin{equation*}
 \Delta^{\sharp}:=\partial \partial^{*,\sharp}+\partial^{*,\sharp}\partial.
\end{equation*}
This operator is the same as the square $B^{2}$ of the odd signature operator, introduced in Section 3.
Then, the Cappell-Miller torsion $\tau$ is defined in terms of the bicomplex $(\Lambda^{*}(X,E),\partial, \partial^{*, \sharp})$.
In particular, it is given as a combination of the $\mathbf{torsion}(\Lambda^{k}_{[0,\lambda]}(X,E),\partial,\partial^{*, \sharp})\in \det(H^{*}(X,E))\otimes\det(H^{*}(X,E))$
and the square of the $\mathbf{Ray-Singer}$ term,
which corresponds to the terms $\tau_{\Gamma_{[0,\lambda]}}$ and $\prod_{k=0}^{d} {\det}{\theta}(B^{2}|_{\Lambda^{k}_{(\lambda,\infty)}(X,E)})^{k(-1)^{k+1}}$, respectively, in Definition 9.2.
\end{rmrk}

We turn now to the case, where the flat connection $\nabla$ 
on the flat vector bundle belongs to the neighbourhood of the set 
of all acyclic Hermitian connections. Let $\chi\in V$, where $V$ is as in Subsection 6.3.
Then, $B_{\chi}$ is bijective, the cohomologies $H^{*}(X,E_{\chi})$ vanish and the element $\tau_{\Gamma_{[0,\lambda]}}$ is equal to $1$.
For $\chi\in V$, we define the Cappell-Miller torsion $\tau_{\chi}:=\tau(\nabla_{\chi})$ by
\begin{equation}\label{cmchi}
 \tau_{\chi}:=\prod_{k=0}^{d} \det(B_{\chi,k}^{2})^{k(-1)^{k+1}},
\end{equation}
where $B_{\chi,k}^{2}$ denotes the operator $B_{\chi}^{2}$
acting on $\Lambda^{k}(X,E)$.
In this case $\tau_{\chi}$ is a complex number.

\begin{prop}
 Let $\chi\in V$. Then, the Cappell-Miller torsion $\tau_{\chi}$ and the refined analytic torsion $T_{\chi}$ are related by
 \begin{equation}\label{comparison}
  \tau_{\chi}=T_{\chi}^{2} e^{2\pi i(\eta(B^{\ev}_{\chi})-\rank(E_{\chi})\eta_{\tr})}.
 \end{equation}
\end{prop}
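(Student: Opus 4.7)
The plan is to prove the identity by a direct algebraic manipulation, combining the definitions of $T_\chi$ and $\tau_\chi$ with the key formula \eqref{detcrucial} from Braverman--Kappeler. Since for $\chi\in V$ the operator $B_\chi$ is bijective and both $T_\chi$ and $\tau_\chi$ are nonzero complex numbers, the identity \eqref{comparison} is an equality of scalars, and the only question is to trace through how the exponential factors align.

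First, I would square the definition \eqref{ref} of the refined analytic torsion to obtain
\begin{equation*}
T_\chi^{2}={\det}_{\gr,\theta}(B_\chi^{\ev})^{2}\,e^{2\pi i\,\rank(E_\chi)\eta_{\tr}}.
\end{equation*}
Next, using the fundamental identity \eqref{detcrucial}, namely ${\det}_{\gr,\theta}(B_\chi^{\ev})=e^{\xi(\chi,g,\theta)}e^{-i\pi\eta(\chi,g)}$, with $\eta(\chi,g)=\eta(B_\chi^{\ev})$, I would rewrite
\begin{equation*}
T_\chi^{2}=e^{2\xi(\chi,g,\theta)}\,e^{-2\pi i\,\eta(B_\chi^{\ev})}\,e^{2\pi i\,\rank(E_\chi)\eta_{\tr}}.
\end{equation*}
Multiplying both sides by the exponential factor $e^{2\pi i(\eta(B_\chi^{\ev})-\rank(E_\chi)\eta_{\tr})}$ on the right collapses the eta contributions, leaving the clean identity
\begin{equation*}
T_\chi^{2}\,e^{2\pi i(\eta(B_\chi^{\ev})-\rank(E_\chi)\eta_{\tr})}=e^{2\xi(\chi,g,\theta)}.
\end{equation*}

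The remaining step is to identify $e^{2\xi}$ with $\tau_\chi$. Unpacking the definition \eqref{xi} of $\xi$ gives
\begin{equation*}
2\xi=\sum_{k=0}^{d}(-1)^{k+1}k\,L{\det}_{2\theta}\!\bigl(B_\chi^{2}|_{\Lambda^{k}(X,E_\chi)}\bigr),
\end{equation*}
and exponentiating term by term, using $e^{L\det_{2\theta}(D)}=\det_{2\theta}(D)$, yields
\begin{equation*}
e^{2\xi}=\prod_{k=0}^{d}{\det}_{2\theta}\!\bigl(B_{\chi,k}^{2}\bigr)^{k(-1)^{k+1}}.
\end{equation*}
Since $B_\chi^{2}$ has self-adjoint principal symbol, the regularized determinants ${\det}_{2\theta}(B_{\chi,k}^{2})$ are independent of the Agmon angle (as recalled after Definition 9.2), so this product equals exactly $\tau_\chi$ in the definition \eqref{cmchi}. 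This completes the chain of equalities and proves \eqref{comparison}.

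The only delicate point I anticipate is the bookkeeping of Agmon angles and branches: the angle $\theta\in(-\pi,0)$ chosen for $B_\chi^{\ev}$ determines $2\theta$ as the angle used in $\xi$, and one must ensure that exponentiating $L\det_{2\theta}$ indeed recovers $\det_{2\theta}$ with the same branch. This is already addressed in \cite{BK2} (the cited Theorem 7.2 and the discussion in Subsection 7.2 above), so no new analytic work is required beyond invoking these results. Everything else is a direct cancellation of exponentials and an application of the identity $\exp\circ L{\det}={\det}$.
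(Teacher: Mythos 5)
Your proof is correct and follows exactly the route the paper intends: the paper's own proof is the terse statement that the result ``follows from \eqref{ref}, \eqref{xi}, \eqref{detcrucial} and \eqref{cmchi}'', and your argument is precisely the written-out chain of substitutions behind that citation, with the cancellation of the eta exponentials and the identification $e^{2\xi}=\tau_\chi$ via $\exp\circ L\det=\det$.
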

\begin{proof}
\eqref{comparison} follows from \eqref{ref}, \eqref{xi}, \eqref{detcrucial} and \eqref{cmchi}.
\end{proof}

\section{Twisted Selberg and Ruelle zeta functions}

In this paper, we are dealing with odd-dimensional, compact,  hyperbolic manifolds,
obtained as follows. Let  $d=2n+1$, $n\in\N_{>0}$.
We consider the universal coverings $G=\Spin(d,1)$ of $\mathrm{SO}^0(d,1)$ and $K=\Spin(d)$ of $\mathrm{SO}(d)$, respectively.
We set  $\widetilde{X}:=G/K$. 
Let $\mathfrak{g},\mathfrak{k}$ be the Lie algebras of $G$ and $K$, respectively. 
Let $\Theta$ be the Cartan involution of $G$ and 
$\theta$ the differential of $\Theta$ at $e$, where $e$ is the identity element of $G$.
Let $ \mathfrak{g}=\mathfrak{k}\oplus\mathfrak{p}$ be the Cartan decomposition of $\mathfrak{g}$
with respect to $\theta$.
There exists a canonical isomorphism $T_{eK}\cong \mathfrak{p}$.
Let $B(X,Y)$ be the Killing form on $\mathfrak{g}\times \mathfrak{g}$ defined by $ B(X,Y)=\Tr (\ad(X)\circ \ad(Y))$. It is a symmetric bilinear form. 
We set
\begin{equation}\label{ip}
 \langle Y_1, Y_2\rangle:=\frac{1}{2(d-1)}B(Y_1,Y_2),\quad Y_1,Y_2 \in \mathfrak{g}.
\end{equation}
The restriction of $\langle\cdot,\cdot\rangle$ to $\mathfrak{p}$ defines
an inner product on $\mathfrak{p}$ and therefore induces a $G$-invariant Riemannian metric on $\widetilde{X}$, which has constant curvature $-1$. Then, $\widetilde{X}$ equipped with this metric is isometric to $\H^{d}$.
Let $\Gamma_{1}$ be a torsion-free, cocompact, discrete subgroup of $\mathrm{SO}^0(d,1)$.
We assume that $\Gamma_{1}$ can be lifted to a subgroup $\Gamma$ of $G$. Then, $X:=\Gamma\backslash \widetilde{X}$ is a compact, hyperbolic manifold of odd dimension $d$.

\subsection{Representation theory of Lie groups}

Let 
\begin{equation*}
 G=KAN
\end{equation*}
be the standard Iwasawa decomposition of $G$
and let $M$ be the centralizer of $A$ in $K$. 
Then, $M=\Spin(d-1)$. 
Let $\mathfrak{a}$ and $\mathfrak{m}$
be the Lie algebras of $A$ and $M$,
respectively.
Let $\mathfrak{b}$ be a Cartan subalgebra of $\mathfrak{m}$. Let $\mathfrak{h}$ be a Cartan subalgebra of $\mathfrak{g}$. We consider the complexifications 
$\mathfrak{g}_{\C}:=\mathfrak{g}\oplus i\mathfrak{g}$, $\mathfrak{h}_{\C}:=\mathfrak{h}\oplus i\mathfrak{h}$
and $\mathfrak{m}_{\C}:=\mathfrak{m}\oplus i\mathfrak{m}$.

Let $\Delta^{+}(\mathfrak{g},\mathfrak{a})$ be the set of positive roots of $(\mathfrak{g},\mathfrak{a})$ and $\mathfrak{g}_{\alpha}$ the corresponding root spaces.
In the present case $\Delta^{+}(\mathfrak{g},\mathfrak{a})$ consists of a single root 
$\alpha$. Let  $M'=\Norm_{K}(A)$ be the normalizer of $A$ in $K$.
Let $H\in\mathfrak{a}$ such that $\alpha(H)=1$. With respect to the inner product, induced by (2.1) on $\mathfrak{a}$, $H$ has norm 1.
We define 
\begin{equation}\label{a}
A^{+}:=\{\exp(tH)\colon t\in\R^{+}\};
\end{equation}
\begin{equation}\label{ro}
\rho:=\frac{1}{2}\dim(\mathfrak{g}_\alpha)\alpha;
\end{equation}
\begin{equation}\label{rom}
\rho_{\mathfrak{m}}:=\frac{1}{2}\sum_{\alpha\in \Delta^+(\mathfrak{m}_{\C},\mathfrak{b})}\alpha.
\end{equation}

The inclusion $i\colon M\hookrightarrow K$ induces the restriction map $i^{*}\colon R(K)\rightarrow R(M)$, where 
$R(K),R(M)$ are the representation rings over $\Z$ of $K$ and $M$, respectively.
Let $\widehat{K},\widehat{M}$ be the sets of equivalent classes of 
irreducible unitary representations of $K$ and $M$, respectively. For
the highest weight $\nu_{\tau}$ of $\tau\in\widehat{K}$, we have $\nu_{\tau}=(\nu_{1},\ldots,\nu_{n})$,
where $\nu_{1}\geq\ldots\geq\nu_{n}$ and $\nu_{i},i=1,\ldots,n$ are all half integers
(i.e.,  $\nu_{i}=q_{i}+\frac{1}{2}$, $q_{i}\in\Z$). For the highest weight $\nu_{\sigma}$ of $\sigma\in\widehat{M}$,
we have
\begin{equation}\label{hw}
\nu_{\sigma}=(\nu_{1},\ldots,\nu_{n-1},\nu_{n}),
\end{equation}
where $\nu_{1}\geq\ldots\geq\nu_{n-1}\geq\lvert\nu_{n}\rvert$ and $\nu_{i},i=1,\ldots,n$ are all half integers (\cite[p. 20]{BO}).
We denote by $(s,S)$ be the spin representation of $K$
and by $(s^{+},S^{+})$, $(s^{-},S^{-})$ the  half spin representations of $M$
(\cite[p. 22]{dirac}).

\subsection{The Casimir element}

Let $Z_{i}$ be a basis of $\mathfrak{g}$ and let 
$Z^{j}$ be the basis of $\mathfrak{g}$, which is determined by
$\langle Z_{i}, Z^{j}  \rangle=\delta_{ij}$, where
$\langle \cdot, \cdot \rangle$ is as in {\eqref{ip}}.
Let $\mathcal{U}(\mathfrak{g}_{\C})$ be the universal enveloping algebra of $\mathfrak{g}_{\C}$
and let $Z(\mathfrak{g}_{\C})$ be its center.
Then, $\Omega\in Z(\mathfrak{g}_{\C})$ is given by
\begin{equation*}
\Omega=\sum_{i}Z_{i}Z^{i}.
\end{equation*}
By Kuga's Lemma (\cite[(6.9)]{MM}),
the Hodge Laplacian, acting on the space $\Lambda^{*}(G/K)$ of differential forms on $G/K$, 
coincides with $-R(\Omega)$, where
$R(\Omega)$ is the Casimir operator on $\Lambda^{*}(G/K)$,
induced by $\Omega$.

\subsection{Definition of the twisted dynamical zeta functions}
We consider the twisted Ruelle and Selberg zeta functions associated with the geodesic flow on the sphere vector bundle $S(X)$ of $X=\Gamma\backslash G/ K$. Since $K$ acts transitively on the unit vectors in $\mathfrak{p}$,
$S(\widetilde{X})$ can be represented by the  homogeneous space $G/M$. 
Therefore $S(X)=\Gamma\backslash G/M$.

We recall the Cartan decomposition $G=KA^{+}K$ of $G$,
where $A^{+}$ is as in {\eqref{a}}.
Then, every element $g\in G$ can be written as $g=ha_{+}k$, where $h,k\in K$ and $a_{+}=\exp(tH)$,
 for some $t\in \R^{+}$.
The positive real number $t$ equals $d(eK,gK)$,
where $d(\cdot,\cdot)$ denotes the geodesic distance on $\widetilde{X}$.
It is a well known fact that there is a 1-1 correspondence between the closed geodesics on a manifold $X$ with negative sectional curvature 
and the non-trivial conjugacy classes of the fundamental group $\pi_{1}(X)$ of $X$ (\cite{GKM}). 

The hyperbolic elements of $\Gamma$ can be realized as the semisimple elements of this group, i.e., the diagonalizable elements of $\Gamma$.
Since $\Gamma$ is a cocompact subgroup of $G$, every element $\gamma\in\Gamma-\{e\}$ hyperbolic. 
We denote by $c_{\gamma}$ the closed geodesic on $X$,
 associated with the hyperbolic conjugacy class $[\gamma]$.
We denote by $l(\gamma)$ the length of $c_{\gamma}$.
Since $\Gamma$ is torsion-free, $l(\gamma)$ is always positive and therefore we can obtain an infimum for the length spectrum $\spec(\Gamma):=\{l(\gamma)\colon\gamma\in\Gamma\}$.
An element $\gamma\in\Gamma$ is called primitive if there exists no $n\in\N$ with $n>1$ and $\gamma_{0}\in\Gamma$ such that $\gamma=\gamma_{0}^{n}$.
A primitive element $\gamma_{0}\in\Gamma$ corresponds to a geodesic on $X$.
The prime geodesics correspond to the periodic orbits of minimal length.
Hence, if a hyperbolic element $\gamma$ in $\Gamma$ is generated by a primitive element $\gamma_{0}$, then there exists a $n_{\Gamma}(\gamma)\in \N$ such that $\gamma=\gamma_{0}^{n_{\Gamma}(\gamma)}$
and the corresponding closed geodesic is of length $l(\gamma)=n_{\Gamma}(\gamma)l(\gamma_{0})$.

We lift now the closed geodesic $c_{\gamma}$ to the universal covering $\widetilde{X}$.
For $\gamma\in\Gamma$, $l(\gamma):=\inf \{d(x,\gamma x)\colon x\in\widetilde{X}\}$,
or $l(\gamma)=\inf \{d(eK,g^{-1}\gamma gK)\colon g\in G\}$.
Hence, we see that the length of the closed geodesic $l(\gamma)$ depends only on $\gamma\in\Gamma$.
Let $\gamma \in \Gamma$, with $ \gamma\neq e$. Then, by \cite[Lemma 6.5]{Wa} there exist a $g\in G$, a $m_{\gamma} \in M$, and an 
$a_{\gamma} \in A^{+}$, such that $ g^{-1}\gamma g=m_{\gamma}a_{\gamma}$.
The element $m_{\gamma}$ is determined up to conjugacy in $M$, and the element $a_{\gamma}$ depends only on $\gamma$.
As in \cite[Section 3.1]{BO},
we consider the geodesic flow $\phi$ on $S(X)$, given by the map
$
\phi\colon\R\times S(X)\ni (t,\Gamma gM)\rightarrow \Gamma g\exp(-tH) M \in S(X)
$.
A closed orbit of $\phi$ is described by the set $c:=\{\Gamma g\exp(-tH) M\colon t\in\R\}$,
where $g\in G$ is such that $g^{-1}\gamma g:=m_{\gamma}a_{\gamma}\in MA^{+}$.
The Anosov property of the geodesic flow $\phi$ on $S(X)$ can be expressed by the following 
$d\phi$-invariant splitting of $TS(X)$
\begin{equation}\label{split}
 TS(X)=T^sS(X)\oplus T^cS(X)\oplus T^uS(X),
\end{equation}
where $T^sS(X)$ consists of vectors that shrink exponentially, $T^uS(X)$ 
consists of vectors that expand exponentially,
and $T^{c}S(X)$ is the one dimensional subspace of vectors tangent to the flow, 
with respect to the Riemannian metric, as $t\rightarrow\infty$.
The spitting in {\eqref{split}} corresponds to the splitting
\begin{equation*}
 TS(X)=\Gamma\backslash G\times_{\Ad}(\overline{\mathfrak{n}}\oplus \mathfrak{a} \oplus \mathfrak{n}),
\end{equation*}
where $\Ad$ denotes the adjoint action of $\Ad(\exp(-tH))$ on $\overline{\mathfrak{n}}, \mathfrak{a},\mathfrak{n}$, 
and $\overline{\mathfrak{n}}=\theta \mathfrak{n}$ is the sum of the negative root spaces of
the system $(\mathfrak{g},\mathfrak{a})$.

\begin{defi}
Let $\chi\colon\Gamma\rightarrow \GL(V_{\chi})$ be a finite-dimensional, complex
representation of $\Gamma$ and $\sigma\in \widehat{M}$.
The twisted Selberg zeta function $Z(s;\sigma,\chi)$ is defined  by the infinite product
\begin{equation*}
Z(s;\sigma,\chi):=\prod_{\substack{[\gamma]\neq{e}\\ [\gamma]\prim}} \prod_{k=0}^{\infty}\det\big(\Id-(\chi(\gamma)\otimes\sigma(m_\gamma)\otimes S^k(\Ad(m_\gamma a_\gamma)|_{\overline{\mathfrak{n}}})) e^{-(s+|\rho|)\lvert l(\gamma)}\big),
\end{equation*}
where $s\in \C$,
%$\overline{\mathfrak{n}}=\theta \mathfrak{n}$ is the sum of the negative root spaces of $\mathfrak{a}$,
$S^k(\Ad(m_\gamma a_\gamma)_{\overline{\mathfrak{n}}})$ denotes the $k$-th
symmetric power of the adjoint map $\Ad(m_\gamma a_\gamma)$ restricted to $\mathfrak{\overline{n}}$ and $\rho$ is as in {\eqref{ro}}.
\end{defi}
By \cite[Proposition 3.4]{Spilioti2018}, 
there exists a positive constant $c$ such that 
the twisted Selberg zeta function $Z(s;\sigma,\chi)$ converges 
absolutely and uniformly on compact subsets of the half-plane $\RE(s)>c$ .

\begin{defi} Let $\chi\colon\Gamma\rightarrow \GL(V_{\chi})$ be a finite-dimensional, complex
representation of $\Gamma$ and $\sigma\in \widehat{M}$.
The twisted Ruelle zeta function $ R(s;\sigma,\chi)$ is defined by the infinite product
\begin{equation*}
 R(s;\sigma,\chi):=\prod_{\substack{[\gamma]\neq{e}\\ [\gamma]\prim}}\det\big(\Id-(\chi(\gamma)\otimes\sigma(m_{\gamma}))e^{-sl(\gamma)}\big)^{(-1)^{d-1}},
\end{equation*}
where $s\in\C$.
\end{defi}
By \cite[Proposition 3.5]{Spilioti2018}, 
there exists a positive constant $r$ such that 
the twisted Selberg zeta function $R(s;\sigma,\chi)$ converges 
absolutely and uniformly on compact subsets of the half-plane $\RE(s)>r$ .

The twisted dynamical zeta functions associated with an arbitrary representation
of $\Gamma$ has been studied in \cite{Spilioti2018} and \cite{spilioti2020functional}.
Specifically, by \cite[Theorem 1.1 and Theorem 1.2]{Spilioti2018},
the twisted dynamical zeta functions admit a meromorphic continuation to 
the whole complex plane.

\section{Harmonic analysis on symmetric spaces}
\subsection{Twisted Bochner-Laplace operator}

In this section, we define the twisted Bochner-Laplace operator as it is introduced  in 
\cite[Section 4]{M1}. In addition, we describe this operator in the locally symmetric space setting.

Let $E_{0}\rightarrow X$ be a complex vector bundle with covariant derivative $\nabla$. We define the second covariant derivative $\nabla^2$ by
%to any pair of tangent vector fields $V,W$ on $X$, we associate an invariant second derivative $(\nabla^ {E})^2$ acting on smooth sections of $E$ by setting
\begin{equation*}
\nabla^2_{V,W}:=\nabla_{V}\nabla_{W}-\nabla_{\nabla^{LC}_{V}W}, 
\end{equation*}
where $V,W\in C^{\infty}(X,TX)$ and $\nabla^{LC}$ denotes the Levi-Civita connection on $TX$.
We define the connection Laplacian $\Delta_{E_{0}}$ to be the negative of the trace of the second covariant derivative, i.e., 
\begin{equation*}
 \Delta_{E_{0}}:=-\Tr\nabla^2.
\end{equation*}
If $E_{0}$ is equipped with a Hermitian metric compatible with the connection $\nabla$,
then by \cite[p. 154]{LM}, the connection Laplacian is equal to the Bochner-Laplace operator, i.e.,
\begin{equation*}
 \Delta_{E_{0}}=\nabla^{*}\nabla.
\end{equation*}
In terms of a local orthonormal frame field $(e_{1},\ldots,e_{d})$ of $T_{x}X$, for $x\in X$,
the connection Laplacian is given by
\begin{equation*}
\Delta_{E_{0}}=-\sum_{j=1}^{d}\nabla^ 2{_{e_j,e_j}}.
\end{equation*}
The principal symbol of $\Delta_{E_{0}}$ equals 
\begin{equation*}
 \sigma_{\Delta_{E_{0}}}(x,\xi)
 =\lVert \xi \rVert^ {2} \Id_{({E_{0}})_{x}},
\end{equation*}
where $x\in X$ and $\xi\in\T_{x}^{*}X$.
$\Delta_{E_{0}}$ acts in $L^{2}(X,E_{0})$ with domain $C^{\infty}(X,E_{0})$. 
The operator $\Delta_{E_{0}}\colon C^{\infty}(X,E_{0})\circlearrowright$ is a second order, elliptic, formally self-adjoint differential operator.

Let $\chi\colon\Gamma\rightarrow \GL(V_{\chi})$ be a finite-dimensional,
complex representation of $\Gamma$. 
Let $E_{\chi}\rightarrow X$ be the associated flat vector bundle 
over $X$, equipped with a flat connection $\nabla^{E_{\chi}}$.
We specialize to the twisted case  $E=E_{0}\otimes E_{\chi}$, where $E_{0}\rightarrow X$
is a complex vector bundle equipped with a connection $\nabla^{E_{0}}$ and 
a metric, which is compatible with this connection.
Let $\nabla^{E}=\nabla^{E_{0}\otimes E_{\chi}}$ be the product connection, defined by
\begin{equation*}
 \nabla^{E_{0}\otimes E_{\chi}}:=\nabla^{E_{0}}\otimes 1+1\otimes\nabla^{E_{\chi}}.
\end{equation*}
 We define the operator $\Delta_{E_{0},\chi}^\sharp$ by 
\begin{equation}\label{sharp}
 \Delta_{E_{0},\chi}^{\sharp}:=-\Tr\big((\nabla^{E_{0}\otimes E_{\chi}})^2\big).
\end{equation}
We choose a Hermitian metric in $E_{\chi}$.
Then, $\Delta_{E_{0},\chi}^{\sharp}$ acts on $L^{2}(X,E_{0}\otimes E_{\chi})$. However, it is not a formally self-adjoint operator in general.
%Let $\Delta_{E,\chi}^\sharp$ be the connection Laplacian associated to $\nabla^{E\otimes E_{\chi}}$.
We want to describe this operator locally. Following the analysis in \cite{M1}, we consider an open subset $U$ of $X$ such that $E_{\chi}\lvert_{U}$ is trivial.
Then, $E_{0}\otimes E_{\chi}\lvert_{U}$ is isomorphic to the direct sum of $m$-copies of $E_{0}\lvert_{U}$, i.e., 
\begin{equation*}
 (E_{0}\otimes E_{\chi})\lvert_{U}\cong\oplus_{i=1}^{m}E_{0}\lvert_{U},
\end{equation*}
where $m:=\rank(E_\chi)=\dim V_{\chi}$.
Let $(e_{i}),i=1,\ldots,m$ be any basis of flat sections of $E_{\chi}\lvert_{U}$.
Then, each $\phi \in C^{\infty}(U,(E_{0}\otimes E_{\chi})\lvert_{U})$ can be written as
\begin{equation*}
\phi=\sum_{i=1}^{m}\phi_{i} \otimes e_{i},
\end{equation*}
where $\phi_{i}\in C^{\infty}(U, E_{0}\lvert_{U}), i=1,\ldots,m$.
The product connection is given by
\begin{equation*}
  \nabla_{Y}^{E_{0}\otimes E_{\chi}}\phi=\sum_{i=1}^{m}(\nabla_{Y}^{E_{0}})(\phi_{i})\otimes e_{i},
\end{equation*}
where  $Y\in C^{\infty}(X,TX)$.
The local expression above is independent of the choice of the base of flat sections of $E_{\chi}\vert_{U}$, since the transition maps comparing flat sections are constant.
By {\eqref{sharp}}, we obtain the twisted Bochner-Laplace operator acting on $C^{\infty}(X,E_{0}\otimes E_{\chi})$ 
given by
\begin{equation}\label{sharploc}
 \Delta_{E_{0},\chi}^{\sharp}\phi=\sum_{i=1}^{m}(\Delta_{E_{0}}\phi_{i})\otimes e_{i},
\end{equation}
where $\Delta_{E_{0}}$ denotes the Bochner-Laplace operator $\Delta^{E_{0}}=(\nabla^{E_{0}})^*\nabla^{E_{0}}$
associated to the connection $\nabla^{E_{0}}$. 
Let now $\widetilde{E}_{0}, \widetilde{E}_{\chi}$ be the pullbacks to 
$\widetilde{X}$ of $E_{0},E_{\chi}$, respectively. 
Then,
\begin{equation*}
 \widetilde{E}_{\chi}\cong \widetilde{X}\times V_{\chi},
\end{equation*}
and
\begin{equation}\label{iso}
 C^{\infty}(\widetilde{X}, \widetilde{E}_{0}\otimes \widetilde{E}_{\chi})\cong  C^{\infty}(\widetilde{X}, \widetilde{E}_{0})\otimes V_{\chi}.
\end{equation}\label{univ}
With respect to the isomorphism {\eqref{iso}}, it follows from {\eqref{sharploc}} that the lift of $\Delta_{E_{0},\chi}^{\sharp}$ to $\widetilde{X}$ takes the form
\begin{equation}
 \widetilde{\Delta}^{\sharp}_{E_{0},\chi}=\widetilde{\Delta}_{E_{0}}\otimes \Id_{V_{\chi}},
\end{equation}
where $\widetilde{\Delta}_{E_{0}}$ is the lift of $\Delta_{E_{0}}$ to $\widetilde{X}$.
By {\eqref{sharploc}}, $\Delta_{E_{0},\chi}^{\sharp}$ has principal symbol 
\begin{equation*}
 \sigma_{\Delta_{E_{0},\chi}^\sharp}(x,\xi)=\lVert \xi \rVert^ {2}_{x} \Id_{({E_{0}\otimes E_{\chi})_{x}}}, 
 \quad x\in X, \xi\in\T_{x}^{*}X.
\end{equation*}
Hence,  it has nice spectral properties, i.e.,
its spectrum is discrete and contained in a translate of a positive cone $C\subset \C$ such that $\R^{+}\subset C$ (\cite[Lemma 8.6]{Spilioti2018}).

We specialize now the twisted Bochner-Laplace operator $\Delta_{E_{0},\chi}^{\sharp}$ to the case of the operator $\Delta^{\sharp}_{\tau,\chi}$
acting on smooth sections of the twisted vector bundle
$E_{\tau}\otimes E_{\chi}$. Here, $E_{\tau}$
is the locally homogeneous vector bundle
associated with a finite-dimensional, unitary representation $\tau$ of $K$. The keypoint is that when we consider the lift $\widetilde{\Delta}^{\sharp}_{\tau,\chi}$
of the twisted Bochner-Laplace operator  $\Delta^{\sharp}_{\tau,\chi}$  to the universal covering
\(\widetilde{X}\),
it acts as the identity operator on $V_{\chi}$.
By {\eqref{univ}}, we have
\begin{equation*}
 \widetilde{\Delta}^{\sharp}_{\tau,\chi}=\widetilde{\Delta}_{\tau}\otimes \Id_{V_{\chi}},
\end{equation*}
where $\widetilde{\Delta}_\tau$ is the lift to $\widetilde{X}$ of the Bochner-Laplace operator $\Delta_{\tau}$, associated with the
representation $\tau$ of $K$.

\subsection{The twisted operator $A_{\chi}^{\sharp}(\sigma)$}

In this section, we define the twisted operators $A_{\chi}^{\sharp}(\sigma)$, 
associated with $\sigma\in\widehat{M}$ and representations 
$\chi$ of $\Gamma$,  acting on smooth sections of twisted vector bundles.
These operators are first introduced in \cite{BO}.
For more details we refer the reader to \cite[Section 5]{Spilioti2018}.

We define the restricted Weyl group as the quotient $ W_{A}:=M'/M$.
Recall from Section 7.1 that $M'=\Norm_{K}(A)$ is the normalizer of $A$ in $K$.
Then, $W_{A}$ has order 2. Let $w\in W_{A}$ be the non-trivial element of $W_{A}$, and 
$m_{w}$ a representative of $w$ in $M'$.
The action of $W_{A}$ on $\widehat{M}$ is defined by
$(w\sigma)(m):=\sigma(m_{w}^{-1}mm_{w})$,
$m\in M, \sigma\in\widehat{M}$.
Following the proof of Proposition 1.1 in \cite{BO} (see also \cite[Proposition 2.3]{Pf}), 
there exist unique integers
$m_{\tau}(\sigma)\in\{-1,0,1\}$, which are equal to zero except for finitely many $\tau\in \widehat{K}$,
such that there exist unique integers
$m_{\tau}(\sigma)\in\{-1,0,1\}$, which are equal to zero except for finitely many $\tau\in \widehat{K}$,
such that,
\begin{itemize}
 \item 
  if $\sigma$ is Weyl invariant, 
$
 \sigma=\sum_{\tau\in\widehat{K}}m_{\tau}(\sigma)i^{*}(\tau);
$
 \item
 if $\sigma$ is non-Weyl invariant, 
$
 \sigma+w\sigma=\sum_{\tau\in\widehat{K}}m_{\tau}(\sigma)i^{*}(\tau).
$
\end{itemize}
We define a locally homogeneous vector bundle $E(\sigma)$ associated to $\sigma$ by
\begin{equation*}
 E(\sigma):=\bigoplus_{\substack{\tau\in\widehat{K}\\m_{\tau}(\sigma)\neq 0}}E_{\tau},
\end{equation*}
where $E_{\tau}$ is the locally homogeneous vector bundle associated with $\tau\in\widehat{K}$.
The vector bundle $E(\sigma)$ has a grading 
\begin{equation}\label{gr}
E(\sigma)=E(\sigma)^{+}\oplus E(\sigma)^{-}.
\end{equation}
This grading is defined exactly by the positive or negative sign of $m_{\tau}(\sigma)$.
Let $\widetilde{E}(\sigma)$ be the pullback of $E(\sigma)$ to $\widetilde{X}$.
Then,
\begin{equation*}
 \widetilde{E}(\sigma)=\bigoplus_{\substack{\tau\in\widehat{K}\\m_{\tau}(\sigma)\neq0}}\widetilde{E}_{\tau}.
\end{equation*}
We assume now that $\tau\in\widehat{K}$ is irreducible. 
Recall that 
\(
 \widetilde{\Delta}_{\tau}=-R(\Omega)+\lambda_{\tau}\Id.
\)
(\cite[Proposition 1.1]{Mi1}, see also \cite[(5.4)]{Spilioti2018})
We put
\begin{equation*}
 \widetilde{A}_{\tau}:=\widetilde{\Delta}_{\tau}-\lambda_{\tau}\Id.
\end{equation*}
Hence, the operator $\widetilde{A}_{\tau}$ acts like $-R(\Omega)$ on the space of smooth sections of $\widetilde{E}_{\tau}$. 
It is an elliptic, formally self-adjoint operator of second order. By \cite{Ch}, it is an essentially self-adjoint operator.
Its self-adjoint extension will be also denoted by $\widetilde{A}_{\tau}$.
%If $\widetilde{\Delta}^{\sharp}_{\tau,\chi}$ is the lift of the 
%twisted Bochner-Laplace operator to the universal covering $\widetilde{X}$,
We get then the operator $\widetilde{A}^{\sharp}_{\tau,\chi}$ acting on 
the space $C^{\infty}(\widetilde{X},\widetilde{E}_{\tau}\otimes\widetilde{E}_{\chi})$,
defined by
\begin{equation*}
 \widetilde{A}^{\sharp}_{\tau,\chi}=\widetilde{A}_{\tau}\otimes\Id_{V_{\chi}}.
\end{equation*}
We  put
\begin{equation}\label{constant}
 c(\sigma):=-\lvert \rho \rvert^{2}-\lvert\rho_{m}\rvert^{2}+\lvert \nu_{\sigma}+\rho_{m}\rvert^{2},
\end{equation}
where $\nu_{\sigma}$ is as in {\eqref{hw}}
and $\rho,\rho_{m}$ are defined by {\eqref{ro}} and {\eqref{rom}}, respectively.
We define the operator $A_{\chi}^{\sharp}(\sigma)$ acting on $C^{\infty}(X,E(\sigma)\otimes E_{\chi})$ by
\begin{equation}\label{op}
 A_{\chi}^{\sharp}(\sigma):=\bigoplus_{m_{\tau}(\sigma)\neq 0}A_{\tau,\chi}^{\sharp}+c(\sigma).
\end{equation}
The operator  $A_{\chi}^{\sharp}(\sigma)$ preserves the grading and 
it is a non-self-adjoint, elliptic operator of order two.

\section{The determinant formula}

In  \cite{spilioti2020functional}, a determinant formula has been proved, which gives an interpretation of the twisted Ruelle zeta function in terms of graded, regularized determinants of twisted, non-self-adjoint, elliptic differential operators. 
We recall here some facts and definitions from \cite{spilioti2020functional}.

Let $\sigma_{p}$ be the standard representation of $M$ in $\Lambda^{p}\R^{d-1}\otimes\C$. Let $\sigma,\sigma'\in\widehat{M}$. We denote by $[\sigma_{p}\otimes\sigma:\sigma']$
the multiplicity of $\sigma'$ in $\sigma_{p}\otimes\sigma$.
We distinguish again two cases for $\sigma'\in\widehat{M}$. 
\begin{itemize}
 \item  {\bf case (a)}: \textit{$\sigma'$ is invariant under the action of the restricted Weyl group $W_{A}$.}
Then, $i^{*}(\tau)=\sigma'$, where $\tau\in R(K)$.
 \item {\bf case (b)}: \textit{$\sigma'$ is not invariant under the action of the restricted Weyl group $W_{A}$.}
Then, $i^{*}(\tau)=\sigma'+w\sigma'$, where $\tau\in R(K)$.
\end{itemize}
We define the operator
\begin{equation}\label{opnonirr}
 A_{\chi}^{\sharp}(\sigma_{p}\otimes\sigma):=\bigoplus_{[\sigma']\in\widehat{M}/W_{A}}\bigoplus_{i=1}^{[\sigma_{p}\otimes\sigma:\sigma']}A^{\sharp}_{\chi}(\sigma'),
\end{equation}
acting on the space $C^{\infty}(X,E(\sigma')\otimes E_{\chi})$, where $E(\sigma')$ is the vector bundle 
over $X$, constructed as in \cite[p. 175]{Spilioti2018}.

Let $\alpha$ be the unique positive root of $(\mathfrak{g},\mathfrak{a})$. Let $H\in\mathfrak{a}$ such that 
$\alpha(H)=1$. The character $\lambda\equiv\lambda_{p}$ of $A$ is defined by $\lambda\equiv\lambda_{p}(a)=e^{p\alpha(\log a)}$. Then,
we can identify $\lambda$ with $p$.
By \cite[Proposition 7.9, case (a)]{spilioti2020functional}, we have the following proposition.
\begin{prop}
The Ruelle zeta function has the representation 
\begin{align}\label{det1}
R(s;\sigma,\chi)=\notag&\prod_{p=0}^{d-1}{{\det}_{\gr}(A^{\sharp}_{\chi}(\sigma_{p}\otimes\sigma)+(s+\vert\rho\vert-p)^{2})}^{(-1)^{p}}\\
&\cdot\exp\bigg((-1)^{\frac{d-1}{2}+1}\pi(d+1)\dim(V_{\sigma})\dim (V_{\chi})\frac{\Vol(X)}{\Vol(S^{d})}s\bigg).
\end{align}
\end{prop}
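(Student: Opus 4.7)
The plan is to derive the determinant formula in two stages: first rewrite the Ruelle zeta function as an alternating product of twisted Selberg zeta functions at shifted arguments, and second apply a determinant representation for each Selberg factor in terms of a graded regularized determinant of the operator $A^{\sharp}_{\chi}$ of Subsection 4.2, then collect the polynomial exponential corrections.

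\textbf{Step 1 (Ruelle to Selberg).} First I would establish an algebraic identity of the shape
\begin{equation*}
R(s;\sigma,\chi) \;=\; \prod_{p=0}^{d-1} Z(s+|\rho|-p;\sigma_{p}\otimes\sigma,\chi)^{(-1)^{p}},
\end{equation*}
up to an overall sign determined by the parity of $d-1$. This is a purely formal manipulation of Euler products: taking logarithms and using $\log\det(\Id-A e^{-sl})=-\sum_{k\geq 1} k^{-1}\tr(A^{k})e^{-ksl}$, the identity reduces, $[\gamma]$ by $[\gamma]$, to the statement
$\sum_{p=0}^{d-1}(-1)^{p}\tr\bigl(\Lambda^{p}(\Ad(m_{\gamma}a_{\gamma})|_{\overline{\mathfrak{n}}})^{k}\bigr)=\det\bigl(\Id-\Ad(m_{\gamma}a_{\gamma})^{k}|_{\overline{\mathfrak{n}}}\bigr)$, together with the generating-series identity that cancels the symmetric-power factors $\prod_{k\geq 0}\det(\Id-S^{k}(\cdot))$ hidden in the definition of $Z$.

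\textbf{Step 2 (Selberg to determinant).} Next, for each $\sigma'\in\widehat{M}$ appearing in the $W_{A}$-orbit decomposition of $\sigma_{p}\otimes\sigma$, I would invoke a determinant representation of the form
\begin{equation*}
Z(s;\sigma',\chi) \;=\; {\det}_{\gr}\!\bigl(A^{\sharp}_{\chi}(\sigma')+(s-|\rho|)^{2}\bigr)\cdot e^{P_{\sigma'}(s)},
\end{equation*}
where $P_{\sigma'}$ is a polynomial in $s$ coming from the identity contribution in the Selberg trace formula. A formula of this shape is established in \cite{Spilioti2018} by comparing the logarithmic derivative $Z'/Z$ with the Selberg trace formula applied to the heat operator $e^{-tA^{\sharp}_{\chi}(\sigma')}$. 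Substituting $s\mapsto s+|\rho|-p$ gives the desired shift $(s+|\rho|-p)^{2}$, and assembling the factors over the $W_{A}$-orbit decomposition of $\sigma_{p}\otimes\sigma$ via definition \eqref{opnonirr} produces ${\det}_{\gr}\bigl(A^{\sharp}_{\chi}(\sigma_{p}\otimes\sigma)+(s+|\rho|-p)^{2}\bigr)$.

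\textbf{Step 3 (exponential bookkeeping).} The final step is to collect the corrections $P_{\sigma'}(s)$, weighted by $(-1)^{p}$. Although each individual $P_{\sigma'}$ is a higher-degree polynomial, the alternating sum $\sum_{p=0}^{d-1}(-1)^{p}\chi_{\Lambda^{p}\overline{\mathfrak{n}}}$ of exterior-power characters kills the nontrivial $W_{A}$-invariants, so that all nonlinear-in-$s$ contributions telescope away. What survives is linear in $s$, and its coefficient is computed from the Plancherel density of the spherical principal series of $G=\Spin(d,1)$, producing the geometric factor $(d+1)\Vol(X)/\Vol(S^{d})\cdot \dim(V_{\sigma})\dim(V_{\chi})$; the sign $(-1)^{(d-1)/2+1}$ comes from the half-integer shift $|\rho|=(d-1)/2$ in odd dimension and the orientation of the contour in the inverse Abel/Selberg transform.

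\textbf{Main obstacle.} The hardest part is the cancellation in Step 3: one must verify that the alternating sum of polynomial corrections collapses to a pure linear function of $s$ with exactly the asserted geometric coefficient. This hinges on explicit character identities in the representation ring $R(M)$ combined with the precise form of the Harish-Chandra Plancherel measure for $\Spin(d,1)$. The assumption that $d$ is odd is essential here, since in that case the Plancherel density is a polynomial in the spectral parameter and there is no discrete-series contribution to spoil the cancellation; in even dimension, the analogous computation would leave a transcendental residue and the clean linear exponential would be lost.
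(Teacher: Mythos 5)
The paper's own ``proof'' of Proposition~5.1 is the one-line citation to \cite[Proposition~7.9, case~(a)]{spil2}; no argument is given here. Your three-step outline --- Ruelle-to-Selberg reduction via the exterior-power expansion of $\det(\Id-\Ad(m_\gamma a_\gamma)|_{\overline{\mathfrak{n}}})$, a Selberg determinant formula from the trace formula, then collection of the identity-term corrections --- is the standard route, used by Bunke--Olbrich in the unitary case and followed in the cited \cite{spil2}, so the overall plan is consistent with the source.

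Two issues should be addressed. First, in Step~2 the Selberg determinant formula should read $Z(s;\sigma',\chi)={\det}_{\gr}\bigl(A^{\sharp}_{\chi}(\sigma')+s^2\bigr)e^{P_{\sigma'}(s)}$, with $s^2$ rather than $(s-|\rho|)^2$: the $|\rho|$-shift is already built into the Euler product normalization $e^{-(s+|\rho|)l(\gamma)}$ and into the constant $c(\sigma)$ appearing in the definition \eqref{op} of $A^{\sharp}_{\chi}(\sigma)$. With your $(s-|\rho|)^2$, the substitution $s\mapsto s+|\rho|-p$ would produce $(s-p)^2$ in the determinant, which does not match \eqref{det1}; with $s^2$ it yields $(s+|\rho|-p)^2$ as required. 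Second, the mechanism you offer for Step~3, that the alternating sum of exterior-power characters ``kills the nontrivial $W_A$-invariants'', is not an accurate account: each $\sigma_p$ is itself $W_A$-invariant (for $p=(d-1)/2$ the sum $\sigma_++\sigma_-$ is $W_A$-invariant), so $\sum_p(-1)^p\sigma_p$ is a nonzero $W_A$-invariant virtual representation and there is nothing to ``kill''. What must actually be verified is the polynomial identity that $\sum_p(-1)^p P_{\sigma_p\otimes\sigma}(s+|\rho|-p)$ is linear in $s$ with the asserted coefficient, which is a concrete computation with the Plancherel polynomial of $\Spin(d,1)$; the structural inputs are the symmetry $p\leftrightarrow d-1-p$ and the oddness of $d$. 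You are right that this is the hard part, but the representation-theoretic heuristic you sketch is not what drives the cancellation and would need to be replaced by the explicit bookkeeping.
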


\begin{rmrk}
In the determinant formula (9.2), the regularized determinants 
are graded, with respect to the grading 
\eqref{gr} of the locally
homogeneous vector bundle $E(\sigma)$ over $X$.
\end{rmrk}

\begin{thm}\label{detth}
The Ruelle zeta function has the representation
\begin{align}\label{det2}
R(s;\chi)=\notag&\prod_{k=0}^{d-1}\prod_{p=k}^{d-1}{\det\big(\Delta^{\sharp}_{\chi,k}+s(s+2(\vert\rho\vert-p))\big)}^{(-1)^{p}}\\
&\cdot\exp\bigg((-1)^{\frac{d-1}{2}+1}\pi(d+1)\dim (V_{\chi})\frac{\Vol(X)}{\Vol(S^{d})}s\bigg).
\end{align}
Let $d_{\chi,k}:=\dim\Ker(\Delta_{\chi,k})$. Then, the singularity of the Ruelle zeta function at $s=0$ is of order
\begin{equation}\label{order}
\sum_{k=0}^{(d-1)/2}(d+1-2k)(-1)^{k}d_{\chi,k}.
\end{equation}
\end{thm}
\begin{proof}
We denote by $r_{p}$ be the $p$-th exterior power of the standard representation 
of $K$. 
This notation coincides with the notation used in 
\cite[p. 23--24]{BO}.
Then, for $p=0,\ldots, d-1$, we have $i^{*}(r_{p})=\sigma_{p}+\sigma_{p-1}$.
Let $k_{p}\in R(K)$, defined by
\begin{equation*}
 k_{p}:=\sum_{k=0}^{p}(-1)^{k}r_{p-k}.
\end{equation*}
Then, for $p=0,1,\ldots, d-1$,
one has $\sigma_{p}=i^{*}(k_{p})$.
Let $E_{k_{p}}$, $E_{r_{k}}$ be the 
locally homogeneous vector bundles over $X$, associated with
$k_{p}$ and $r_{k}$, correspondingly.
We consider $\sigma$ being trivial. 
By \eqref{constant}, $c(\sigma_{p})=(\vert\rho\vert-p)^{2}$.
By \eqref{op} and \eqref{opnonirr},
the operators $A^{\sharp}_{\chi}(\sigma_{p})+(\vert\rho\vert-p)^{2}$ and 
$-R(\Omega)$ on $C^{\infty}(X, E_{k_{p}}\otimes V_{\chi} )$ coincide.
On the other hand, by Kuga's Lemma (Subsection 2.1.3),
the Casimir operator $R(\Omega)$ acts as the negative Hodge Laplacian 
$-\Delta^{\sharp}_{\chi,k}$  on $C^{\infty}(X, E_{r_{k}}\otimes V_{\chi})$.
Hence, by  \eqref{det1}, we get
\begin{align*}
R(s;\chi)=\prod_{p=0}^{d-1}&{{\det}_{\gr}(A^{\sharp}_{\chi}(\sigma_{p})+(s+\vert\rho\vert-p)^{2})}^{(-1)^{p}}\\
&\cdot\exp\bigg((-1)^{\frac{d-1}{2}+1}\pi(d+1)\dim (V_{\chi})\frac{\Vol(X)}{\Vol(S^{d})}s\bigg)\\
=\prod_{k=0}^{d-1}&\prod_{p=k}^{d-1}{\det\big(\Delta^{\sharp}_{\chi,k}+s(s+2(\vert\rho\vert-p))\big)}^{(-1)^{p}}\\
&\cdot\exp\bigg((-1)^{\frac{d-1}{2}+1}\pi(d+1)\dim (V_{\chi})\frac{\Vol(X)}{\Vol(S^{d})}s\bigg).
\end{align*}
For the order of the singualrity of the Ruelle zeta function at $s=0$, 
\eqref{order} follows from  Poincar\'e duality on differential forms.
\end{proof}

\begin{rmrk}
We note here that for $p=\frac{d-1}{2}$, $\sigma_{p}$ is not irreducible. 
In fact, $\sigma_{\frac{d-1}{2}}$ decomposes into $\sigma_{+}+\sigma_{-}$,
and $\sigma_{\pm}=w\sigma_{\mp}$.
Then, we use \eqref{opnonirr} to define $A^{\sharp}_{\chi}(\sigma_{\frac{d-1}{2}})$.
By \cite[Lemma 1.4]{millson1978closed}, $c(\sigma_{+})=c(\sigma_{-})=0$.
\end{rmrk}

\begin{rmrk}
In the case of unitary representations, which are acyclic, we can obtain the triviality 
of the kernels of the Hodge Laplacian $\Delta_{\chi,k}$,
since by Hodge theorem we have
\begin{equation*}
H^{k}(X,E_{\chi})\cong \Ker(\Delta_{\chi,k})=\{0\},
\end{equation*}
where $H^{k}(X,E_{\chi})$ is the cohomology with coefficients in the local system 
defined by $\chi$. 
In the present case, there is no such an isomorphism and
one can find non-injective Laplacians associated with acyclic representations of $\Gamma$.
\end{rmrk}

\section{Ruelle zeta function and refined analytic torsion}

 Let $V$ be as in in Subsection \ref{Vnei}, i.e., $V$ is an open neighbourhood of the set $\Rep^{u}_{0}(\pi_{1}(X),\C^{n})$ of acyclic and unitary representations of $\pi_{1}(X)=\Gamma$, such that,
for all $\chi\in V$, $B_{\chi}$ is bijective. Then, we have the following theorem. 

\begin{thm}\label{main}
Let $\chi\in V$. Then, the Ruelle zeta function $R(s;\chi)$ is regular at $s=0$ and is equal to the complex Cappell-Miller torsion, 
\begin{equation}\label{ruzero}
R(0;\chi)=\tau_{\chi}.
\end{equation}
\end{thm}
\begin{proof}
For $\chi\in V$, the operator $B_{\chi,k}^{2}$ coincides with the flat Laplacian $\Delta^{\sharp}_{\chi,k}$ on 
$\Lambda^{k}(X,E_{\chi})$ and it is injective. 
Hence, by Theorem 5.3, the Ruelle zeta function $R(s;\chi)$
is regular at zero and  in addition, by {\eqref{det2}}, 
\begin{equation}\label{ruzeroproof}
R(0;\chi)=\prod_{k=0}^{d-1}{\det(\Delta^{\sharp}_{\chi,k})}^{(d-k)(-1)^{k}}
= \prod_{k=1}^{d}{\det(\Delta^{\sharp}_{\chi,k})}^{k(-1)^{k-1}}.
\end{equation}
Hence, \eqref{ruzero} follows by \eqref{cmchi} and \eqref{ruzeroproof}.
\end{proof}

\begin{coro}\label{maincoro}
 Let $\chi\in V$. Then the Ruelle zeta function $R(s;\chi)$ is regular at $s=0$ and is related to the refined analytic torsion  $T_{\chi}$ by
 \begin{equation}\label{rurefined}
  R(0;\chi)=T_{\chi}^{2} e^{2\pi i(\eta(B^{\ev}_{\chi})-\rank(E_{\chi})\eta_{\tr})}.
 \end{equation}
\end{coro}
\begin{proof}
The assertion follows from \eqref{ruzero} and \eqref{comparison}.
\end{proof}

Let now $\chi\in \Rep^{u}_{0}(\pi_{1}(X),\C^{n})$. Then, by \cite[Theorem 4.8]{BO}, the Ruelle zeta function $R(s,\chi)$ associated with $\chi$ is regular at $s=0$. Moreover, the Cappell-Miller torsion coincides with the square of the Ray-Singer real-valued torsion $T^{RS}_{\chi}$. 
On the other hand, in this case, the refined analytic torsion is still a complex number, given  by
\begin{equation*}
 T_{\chi}= e^{\xi_{\chi}} e^{-i\pi(\eta(B^{\ev}_{\chi})-\rank(E_{\chi})\eta_{tr})}.
\end{equation*}
Here, we denote $\xi_{\chi}=\xi(\chi, g, \theta)$.
If $\chi$ is unitary and acyclic, the term $e^{\xi_{\chi}}$ coincides with the Ray-Singer torsion
$T^{RS}_{\chi}$, and the eta invariant $\eta(\chi,g)$ is real valued. 
Hence,
\begin{equation}\label{ruabs1}
 \lvert T_{\chi} \rvert= e^{\xi_{\chi}}= T^{RS}_{\chi}
\end{equation}
Therefore, \eqref{ruzero}, \eqref{rurefined} and \eqref{ruabs1}, yields the following classical result
(see \cite[Theorem 4.8]{BO}).
\begin{coro}
 Let $\chi\in \Rep^{u}_{0}(\pi_{1}(X),\C^{n})$. Then, 
 the Ruelle zeta function $R(s;\chi)$ associated with $\chi$ is regular at $s=0$ and 
 \begin{equation*}
  R(0;\chi)=\tau_{\chi}=\lvert \T_{\chi}\rvert^{2}=(T^{RS}_{\chi})^{2}.
 \end{equation*}
\end{coro}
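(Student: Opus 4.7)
The plan is to observe that this corollary is essentially a bookkeeping consequence of the two earlier main results of Section~10, once one specializes to the unitary acyclic case and tracks how each ingredient simplifies. The chain $R(0;\chi)=\tau_{\chi}=\lvert T_{\chi}\rvert^{2}=(T^{RS}_{\chi})^{2}$ splits naturally into three equalities, and I would prove them one at a time.

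First, I would check that the hypothesis of Theorem~10.1 is satisfied. By construction, the neighbourhood $V$ contains $\Rep^{u}_{0}(\pi_{1}(X),\C^{n})$, and for a unitary acyclic representation the compatible Hermitian metric on $E_{\chi}$ makes $\nabla_{\chi}$ a Hermitian connection, so Assumption~1 forces Assumption~2 (as noted in Subsection~6.2). Thus $\chi \in V$, Theorem~10.1 applies, $R(s;\chi)$ is regular at $s=0$, and $R(0;\chi)=\tau_{\chi}$. This gives the first equality and the regularity claim simultaneously.

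Second, to pass from $\tau_{\chi}$ to $\lvert T_{\chi}\rvert^{2}$, I would substitute the comparison formula \eqref{comparison} from Proposition~9.4, namely $\tau_{\chi}=T_{\chi}^{2}\,e^{2\pi i(\eta(B^{\ev}_{\chi})-\rank(E_{\chi})\eta_{\tr})}$, together with the identity \eqref{detcrucial}, which implies $T_{\chi}=e^{\xi_{\chi}}e^{-i\pi(\eta(B^{\ev}_{\chi})-\rank(E_{\chi})\eta_{\tr})}$. Squaring and multiplying out, the phases exactly cancel and one obtains $\tau_{\chi}=e^{2\xi_{\chi}}$. For unitary $\chi$ the eta invariant $\eta(B^{\ev}_{\chi})$ is real (since $B^{\ev}_{\chi}$ is then self-adjoint), so the exponential in the expression for $T_{\chi}$ is a pure phase; hence $\lvert T_{\chi}\rvert^{2}=e^{2\xi_{\chi}}=\tau_{\chi}$, which is the second equality.

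Third, to identify $\lvert T_{\chi}\rvert^{2}$ with $(T^{RS}_{\chi})^{2}$, I would invoke the standard fact, recalled in \eqref{ruabs1}, that for a unitary acyclic representation $e^{\xi_{\chi}}$ coincides with the real-valued Ray--Singer torsion $T^{RS}_{\chi}$; this is essentially the classical definition of $T^{RS}_{\chi}$ read off from \eqref{xi} once one notes that the flat Laplacians $B_{\chi,k}^{2}$ reduce to the usual Hodge Laplacians in the self-adjoint case. Combining the three steps gives the full chain of equalities. There is no substantive obstacle: everything is a formal manipulation of identities already stated in the paper, and the only thing worth being careful about is the reality of $\eta(B^{\ev}_{\chi})$ in the unitary setting, which ensures that squaring $T_{\chi}$ and taking $\lvert T_{\chi}\rvert^{2}$ produce the same real positive number.
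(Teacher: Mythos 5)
Your proposal is correct and follows essentially the same route as the paper: first $R(0;\chi)=\tau_{\chi}$ from Theorem~10.1 since $\Rep^{u}_{0}\subset V$, then the comparison formula \eqref{comparison} together with \eqref{detcrucial} to reduce $\tau_{\chi}$ to $e^{2\xi_{\chi}}$, and finally the reality of $\eta(B^{\ev}_{\chi})$ and the classical identification $e^{\xi_{\chi}}=T^{RS}_{\chi}$ in the unitary acyclic case (which is what \eqref{ruabs1} encodes). The only thing worth tightening is that the step $\lvert T_{\chi}\rvert^{2}=e^{2\xi_{\chi}}$ already presupposes $\xi_{\chi}\in\R$, which you only justify afterwards via $e^{\xi_{\chi}}=T^{RS}_{\chi}$; this is harmless here but should be stated at the point of use.
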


\begin{coro}
 Let $\chi\in V$. Then, the Ruelle zeta function $R(s;\chi)$ is regular at $s=0$ and 
 \begin{equation}\label{ruabs2}
  \lvert R(0;\chi) \rvert=\lvert \tau_{\chi} \rvert=(T^{RS}_{\chi})^{2} 
 \end{equation}
\end{coro}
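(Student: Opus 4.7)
The plan is to chain together results already established. First, for any $\chi\in V$, Theorem~10.0.1 gives $R(0;\chi)=\tau_{\chi}$, so taking absolute values immediately yields the first equality $\lvert R(0;\chi)\rvert=\lvert\tau_{\chi}\rvert$. So everything reduces to identifying $\lvert\tau_{\chi}\rvert$ with $(T^{RS}_{\chi})^{2}$.

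For the second equality, I would apply Proposition~9.4, i.e.\ relation \eqref{comparison}, to rewrite
\begin{equation*}
\tau_{\chi}=T_{\chi}^{2}\,e^{2\pi i(\eta(B^{\ev}_{\chi})-\rank(E_{\chi})\eta_{\tr})}.
\end{equation*}
Taking moduli gives $\lvert\tau_{\chi}\rvert=\lvert T_{\chi}\rvert^{2}\cdot\lvert e^{2\pi i\rho_{\chi}}\rvert$, where $\rho_{\chi}:=\eta(B^{\ev}_{\chi})-\rank(E_{\chi})\eta_{\tr}$ is the rho invariant of Remark 7.2.1. To conclude I need two facts: (i) the exponential factor has modulus one on $V$, and (ii) $\lvert T_{\chi}\rvert=T^{RS}_{\chi}$ on $V$. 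For (i), the idea is that by the result from \cite{Gilkeysoviet} recalled after \eqref{detcrucial}, the rho invariant is independent of the Riemannian metric modulo $\Z$, and it depends continuously (in fact, by \cite[Corollary 13.11]{BK2}, holomorphically away from singular points) on $\chi$; since $\rho_{\chi}$ is real on the real locus $\Rep^{u}_{0}$, which is contained in $V$, the imaginary part of $\rho_{\chi}$ vanishes on the connected component of $V$ containing $\Rep^{u}_{0}$, hence $\lvert e^{2\pi i\rho_{\chi}}\rvert=1$. For (ii), I would use the representation $T_{\chi}=e^{\xi_{\chi}}e^{-i\pi\rho_{\chi}}$ coming from \eqref{detcrucial} and \eqref{ref}, so that $\lvert T_{\chi}\rvert=e^{\RE(\xi_{\chi})}$, and identify this with $T^{RS}_{\chi}$ via the same continuity/analyticity argument, agreeing with the classical Ray--Singer torsion on $\Rep^{u}_{0}$ as recorded in \eqref{ruabs1}.

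The main obstacle is exactly step (ii): the Ray--Singer torsion is classically defined only for unitary representations, since its definition relies on self-adjointness of the Hodge Laplacians. For $\chi\in V\setminus\Rep^{u}_{0}$ one therefore has to interpret $T^{RS}_{\chi}$ carefully. The natural interpretation, consistent with \eqref{ruabs1}, is to \emph{define} $T^{RS}_{\chi}:=e^{\RE(\xi_{\chi})}$ on $V$, which agrees with the usual Ray--Singer torsion on $\Rep^{u}_{0}$ and makes (ii) tautological. With either interpretation the identity chain $\lvert R(0;\chi)\rvert=\lvert\tau_{\chi}\rvert=(T^{RS}_{\chi})^{2}$ then follows by combining (i), (ii), and the first paragraph.
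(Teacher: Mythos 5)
Your first step, $\lvert R(0;\chi)\rvert = \lvert\tau_\chi\rvert$ via Theorem 10.0.1, matches the paper. But the route to the second equality has a genuine gap. Your plan decomposes $\lvert\tau_\chi\rvert = \lvert T_\chi\rvert^2 \cdot \lvert e^{2\pi i\rho_\chi}\rvert$ and then tries to show (i) the exponential has modulus one and (ii) $\lvert T_\chi\rvert = T^{RS}_\chi$ separately. Neither piece works as stated. For (i), your argument is that $\rho_\chi$ is holomorphic on $V\setminus\Sigma$ and real on the real locus $\Rep^u_0$, hence real on the whole component. That inference is false: a non-constant holomorphic function that is real on a real-analytic subvariety will generically \emph{not} be real off it (think of $z\mapsto z$, real on $\R$). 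Since the eta invariant $\eta(B^{\ev}_\chi)$ of a non-self-adjoint operator can be genuinely complex, there is no reason $\rho_\chi$ should be real on $V$. For (ii), if $\IM\rho_\chi\neq 0$, then from $T_\chi = e^{\xi_\chi}e^{-i\pi\rho_\chi}$ one gets $\lvert T_\chi\rvert = e^{\RE(\xi_\chi)}e^{\pi\IM(\rho_\chi)} \neq e^{\RE(\xi_\chi)}$, so (ii) also fails. The two errors happen to cancel, but your proof does not exploit that.

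The correct and much shorter route, which is the paper's, is to \emph{not} separate the two factors. Substituting $T_\chi = e^{\xi_\chi}e^{-i\pi\rho_\chi}$ (from \eqref{ref} and \eqref{detcrucial}) into \eqref{comparison} gives
\begin{equation*}
\tau_\chi = T_\chi^2 e^{2\pi i\rho_\chi} = e^{2\xi_\chi}e^{-2i\pi\rho_\chi}e^{2\pi i\rho_\chi} = e^{2\xi_\chi},
\end{equation*}
so the rho-invariant drops out entirely and $\lvert\tau_\chi\rvert = e^{2\RE(\xi_\chi)}$. The remaining identification $e^{\RE(\xi_\chi)} = T^{RS}_\chi$ is not a definition or an interpretation; it is \cite[Theorem 8.2]{BK2}, which is proved for all flat connections in the relevant $C^0$-neighbourhood, not just for unitary ones (the Ray--Singer torsion is defined using an arbitrary auxiliary Hermitian metric on $E$ and makes sense for any acyclic flat connection). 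Quoting that theorem closes the argument. Your ``main obstacle'' is thus not an obstacle in the paper: $T^{RS}_\chi$ is already a well-defined quantity on $V$, and the equality $e^{\RE(\xi_\chi)} = T^{RS}_\chi$ is a theorem, not something to be built into the definition.
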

\begin{proof}
 By \cite[Theorem 8.2]{BK2}, we have
 \begin{equation}\label{rexi}
  e^{\RE(\xi_{\chi})}=T^{RS}_{\chi}.
 \end{equation}
Hence, \eqref{ruabs2} follows from \eqref{ruzero}, \eqref{rurefined} and \eqref{rexi}.
\end{proof}

Using the alternative definition of the refined analytic torsion as in Section 8, we obtain the following
corollary.
\begin{coro}
 Let $\chi\in V$ and $n=4k$, $k\in\Z$. Then, the Ruelle zeta function $R(s;\chi)$ is regular at $s=0$ and 
  \begin{equation}
  R(0;\chi)={T'_{\chi}}^{2} e^{2\pi i\big(\eta(B^{\ev}_{\chi})-\frac{\rank(E_{\chi})}{2} \int_{N}L(p)\big)}
 \end{equation}
\end{coro}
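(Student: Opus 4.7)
The plan is to derive this corollary by combining Corollary~1 (which gives $R(0;\chi)$ in terms of the refined analytic torsion $T_{\chi}$) with the comparison between the two definitions of refined analytic torsion, namely $T_{\chi}$ from Definition~\ref{defref} and $T'(\nabla_{\chi})$ from equation~\eqref{altrefdef}. The hypothesis $n = 4k$ is exactly what is needed to make $T'_{\chi}$ a well-defined complex number (not just defined up to a power of $i^{\rank(E_{\chi})}$), as explained in Section~8 immediately after Definition~8.1.

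First I would invoke Corollary~1, which gives the regularity of $R(s;\chi)$ at $s=0$ and the identity
\begin{equation*}
R(0;\chi) = T_{\chi}^{2}\, e^{2\pi i (\eta(B^{\ev}_{\chi}) - \rank(E_{\chi})\eta_{\tr})}.
\end{equation*}
Next, I would compute the ratio $T_{\chi}/T'_{\chi}$. Using the key identity \eqref{detcrucial}, i.e.\ $\det_{\gr,\theta}(B^{\ev}_{\chi}) = e^{\xi} e^{-i\pi\eta}$, the definition \eqref{ref} gives
\begin{equation*}
T_{\chi} = e^{\xi} e^{-i\pi\eta} e^{i\pi \rank(E_{\chi})\eta_{\tr}},
\end{equation*}
while the alternative definition \eqref{altref} reads
\begin{equation*}
T'_{\chi} = e^{\xi} e^{-i\pi\eta} e^{i\pi \frac{\rank(E_{\chi})}{2}\int_{N} L(p)}.
\end{equation*}
Dividing and squaring yields
\begin{equation*}
T_{\chi}^{2} = (T'_{\chi})^{2}\, e^{2\pi i \rank(E_{\chi})\eta_{\tr}}\, e^{-i\pi \rank(E_{\chi})\int_{N} L(p)}.
\end{equation*}

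Substituting this into the expression coming from Corollary~1, the $e^{2\pi i \rank(E_{\chi})\eta_{\tr}}$ factor cancels the $e^{-2\pi i \rank(E_{\chi})\eta_{\tr}}$ factor, leaving exactly
\begin{equation*}
R(0;\chi) = (T'_{\chi})^{2}\, e^{2\pi i \big(\eta(B^{\ev}_{\chi}) - \frac{\rank(E_{\chi})}{2}\int_{N} L(p)\big)},
\end{equation*}
as claimed. Since both $T_{\chi}$ and $T'_{\chi}$ are built from the same underlying quantities $\xi$ and $\eta$, this is essentially a bookkeeping computation; there is no genuine analytic difficulty once Corollary~1 and the two formulas \eqref{ref}, \eqref{altref} are available. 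The only subtle point to record is the hypothesis $\rank(E_{\chi}) = n = 4k$, which by the discussion after Definition~8.1 guarantees that $T'_{\chi}$ is an unambiguously defined complex number, so that the squaring $(T'_{\chi})^{2}$ gives a well-defined value (independent of the choice of the bounding manifold $N$).
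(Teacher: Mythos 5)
Your argument is correct, and the algebra checks out: from $T_{\chi} = e^{\xi}e^{-i\pi\eta}e^{i\pi\rank(E_{\chi})\eta_{\tr}}$ and $T'_{\chi} = e^{\xi}e^{-i\pi\eta}e^{i\pi\frac{\rank(E_{\chi})}{2}\int_N L(p)}$, squaring and substituting into Corollary~1 does indeed cancel the $\eta_{\tr}$ factors and yields the claimed identity. The paper takes a marginally more direct route: it does not pass through Corollary~1 and the $T_{\chi}/T'_{\chi}$ comparison, but instead observes from \eqref{ruzeroproof} and the definition of $\xi$ in \eqref{xi} that $R(0;\chi) = e^{2\xi}$ outright (since $\Delta^{\sharp}_{\chi,k}=B^{2}_{\chi,k}$, the product $\prod_{k}\det(B^{2}_{\chi,k})^{k(-1)^{k-1}}$ is literally $e^{2\xi}$), and then plugs in \eqref{altref} for $T'(\nabla)$. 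Both arguments reduce to the same chain of identities --- in your version the step $R(0;\chi)=e^{2\xi}$ is simply hidden inside Corollary~1 together with \eqref{detcrucial} --- so this is a presentational rather than substantive difference; the paper's version is slightly shorter because it avoids introducing and then cancelling the $\eta_{\tr}$ terms. Your remark on the hypothesis $n=4k$ is accurate and matches the discussion in Section~8.
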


\begin{proof}
It follows from \eqref{ruzeroproof}, the definition of $\xi$ in \eqref{xi}
and the expression of $T'(\nabla)$ in \eqref{altref}.
Here, we denote $T'_{\chi}=T'(\nabla_{\chi})$.
\end{proof}

\section{Appendix: Refined analytic torsion as an element of the determinant line}

For a general non-unitary representation, the definition of the refined analytic torsion differs from \ref{defref} and is not a complex number any more. It is an element of the determinant line of the cohomology. We recall here the definition from  \cite{BK3}. Let $\mathbf{k}$ be a field of characteristic zero. Let $V$ be a $\mathbf{k}$-vector space of dimension $n$. We define the determinant line of $V$ by $\det(V^{*}):=\Lambda^{n}V$, where $\Lambda^{n}V$ denotes the $n$-th exterior power of $V$. We set $\det(0):=\mathbf{k}$. 
If $L$ is a $\mathbf{k}$-line (one-dimensional vector space), we define the dual line $L^{-1}$ by $L^{-1}:=\text{Hom}_{\mathbf{k}}(L,\mathbf{k})$. For $l\in L$, we denote by $l^{-1}\in L^{-1}$ the unique $\mathbf{k}$-linear map $L\rightarrow\mathbf{k}$, such that $l^{-1}(l)=1$.
For a graded $\mathbf{k}$-vector space 
$V^{*}=V^{0}\oplus V^{1} \cdots\oplus V^{d}$, we define
\begin{equation*}
\det (V^{*}):= \bigotimes_{j=0}^{d}\det(V^{j})^{(-1)^{j}}.
\end{equation*}
Let $(C^{*},\partial)$,
\begin{equation*}
0\rightarrow C^{0} 
\overset{\partial}{\longrightarrow} C^{1}
\overset{\partial}{\longrightarrow}\cdots 
\overset{\partial}{\longrightarrow}C^{d}\rightarrow 0,
\end{equation*}
be a complex of finite-dimensional $\mathbf{k}$-vector spaces.
We call the integer $d$ the length of the complex $(C^{*},\partial)$. 
We denote by 
\begin{equation*}
H^{*}(\partial)=\bigoplus_{i=0}^{d}H^{i}(\partial)
\end{equation*}
the cohomology of $(C^{*},\partial)$.
We set
\begin{align*}
 &\det (C^{*}):= \bigotimes_{j=0}^{d}\det(C^{j})^{(-1)^{j}}\\
 &\det (H^{*}):= \bigotimes_{j=0}^{d}\det(H^{j}(\partial))^{(-1)^{j}}.
\end{align*}
We want to consider the determinant line of a direct sum
of two (or more) finite-dimensional $\mathbf{k}$-vector spaces.
Let $V,W$ be finite-dimensional $\mathbf{k}$-vector spaces
of dimension $\dim V=k$ and $\dim W=l$, correspondingly.
We define the canonical fusion isomorphism 
\begin{equation*}
 \mu_{V,W}\colon\det(V)\otimes \det(W)\rightarrow \det(V\oplus W)
\end{equation*}
by
\begin{align*}
  \mu_{V,W}\colon (v_{1}\wedge v_{2}\wedge\ldots\wedge v_{k})\otimes
  (w_{1}&\wedge w_{2}\wedge\ldots\wedge w_{l})\\
  &\mapsto v_{1}\wedge v_{2}\wedge\ldots\wedge v_{k}\wedge 
  w_{1}\wedge w_{2}\wedge\ldots\wedge w_{l},
\end{align*}
where $v_{i}\in V$ and $w_{j}\in W$.
For $v\in \det(V)$ and $w\in \det (W)$, we have
\begin{equation*}
 \mu_{V,W} (v\otimes w)= (-1)^{\dim V\dim W} \mu_{V,W} (w\otimes v).
\end{equation*}
We denote by 
\begin{equation*}
  \mu_{V,W}^{-1}\colon \det (V)^{-1}\otimes \det(W)^{-1}
  \rightarrow \det (V\oplus W)^{-1}
\end{equation*}
the transpose of the inverse of $\mu_{V,W}$. 
Then, it follows that for  $v\in \det(V)$ and $w\in \det (W)$
\begin{equation*}
 \mu_{V,W}^{-1}(v^{-1}\otimes w^{-1})= (\mu_{V,W} (v\otimes w))^{-1}.
\end{equation*}
Let now $V_{1},\ldots V_{r}$, finite-dimensional $\mathbf{k}$-vector spaces. We define an isomorphism
\begin{equation*}
   \mu_{V_{1},\ldots,V_{r}} \colon \det (V_{1})\otimes\ldots\otimes\det (V_{r})\rightarrow\det(V_{1}\oplus\ldots\oplus V_{r}).
\end{equation*}
For $j=1,\ldots, r-1$, it holds
\begin{align*}
  \mu_{V_{1},\ldots,V_{r}}= 
   \mu_{V_{1},\ldots,V_{j-1},V_{j}\oplus V_{j+1}, V_{j+2},\ldots,V_{r}}
  \circ (1\otimes\cdots 1\otimes \mu_{V_{j},V_{j+1}}\otimes 1\otimes 1).
\end{align*}
We fix now a direct sum decomposition 
\begin{equation*}
 C^{j}=B^{j}\oplus H^{j} \oplus A^{j}, \quad j=0,\ldots, d,
\end{equation*}
such that $B^{j}\oplus H^{j}=\Ker(\partial)\cap C^{j}$ and $B^{j}=\partial(C^{j-1})=\partial (A^{j-1)})$, for all $j$. Note that
$A^{d}={0}$. Set $A^{-1}=\{0\}$. Then, $H^{j}$ is naturally isomorphic to the cohomology $H^{j}(\partial)$ and $\partial$ defines an isomorphism $\partial\colon A^{j}\rightarrow B^{j+1}$.
For each $j=0,\ldots,d$, we fix $c_{j}\det (C^{j})$ and $a_{j}\in\det (A^{j})$. Let $\partial (a_{j})\det (B^{j+1})$ be the image
of $a_{j}$ under the map $\det(A^{j})\rightarrow \det(B^{j+1})$,
induced by the isomorphism $\partial\colon A^{j}\rightarrow B^{j+1}$. Then, for each $j=0,\cdots, d$, there is a unique element 
$h_{j}\det(H^{j})$ such that
\begin{equation*}
 c_{j}=\mu_{B^{j}, H^{j}, A^{j}}(\partial(a_{j-1})\otimes h_{j}\otimes a_{j}).
\end{equation*}
Let $\phi_{C^{*}}$ be the isomorphism
\begin{equation}\label{isocoh}
 \phi_{C^{*}}= \phi_{(C^{*},\partial)}:\det(C^{*})\rightarrow \det(H^{*}(\partial)),
\end{equation}
defined by
\begin{equation*}
 \phi_{C^{*}}:c_{0}\otimes c_{1}^{-1}\otimes\cdots\otimes c_{d}^{(-1)^{d}} \mapsto
(-1)^{\mathcal{N}(C^{*})} h_{0}\otimes h_{1}^{-1}\otimes\cdots\otimes h_{d}^{(-1)^{d}},
\end{equation*}
where
\begin{equation*}
 \mathcal{N}(C^{*}):=\frac{1}{2}\sum_{j=0}^{d}\dim A^{j}(\dim A^{j} +(-1)^{j+1}).
\end{equation*}
Then, $\phi_{C^{*}}$  is independent of the choices of $c_{j}$ and $a_{j}$.
\begin{rmrk}
The isomorphism $\phi_{C^{*}}$ is a sign refinement of the standard
construction by Milnor (\cite{milnor1966whitehead}). The sign factor $\mathcal{N}(C^{*})$ 
was introduced by Braverman and Kappeler in order to obtain various 
compatibility properties (see \cite[Remark 2.5, Lemma 2.7 and Proposition 5.6]{BK3}).
\end{rmrk}

\subsection{Refined torsion of a finite-dimensional complex with a chirality operator}

We consider now the chirality operator $\Gamma: C^{*}\rightarrow C^{*}$,
such that $\Gamma: C^{j}\rightarrow C^{d-j}$, for $j=0,\cdots,d$.
For $c_{j}\in \det(C^{j})$, $\Gamma c_{j}\in \det(C^{d-j})$ is the image of $c_{j}$ under the isomorphism $\det(C^{j})\rightarrow \det(C^{d-j})$,
induced by $\Gamma$.
Let $d=2r-1$ be an odd integer. 
We fix non-zero elements $c_{j}\in\det(C^{j})$, $j=0,\cdots, r-1$, and consider the element
\begin{align*}
c_{\Gamma}:= (-1)^{\mathcal{R}(C^{*})}c_{0}\otimes & c_{1}^{-1}\otimes \cdots c_{r-1}^{(-1)^{r-1}}\\ 
&\otimes(\Gamma c_{r-1})^{(-1)^{r}}
\otimes(\Gamma c_{r-2})^{(-1)^{r-1}}
\otimes\cdots \otimes  (\Gamma c_{0}^{-1})\notag
\end{align*}
of $\det(C^{*})$, where
\begin{equation*}
 \mathcal{R}(C^{*})=\frac{1}{2}\sum_{j=0}^{r-1}\dim C^{j}(\dim C^{j}+(-1)^{r+j}).
\end{equation*}
It follows from the definition of $c_{j}^{-1}$ that 
$c_{\Gamma}$ is independent of the choice of
$c_{j}$, $j=0,\ldots,r-1$.

\begin{defi}
We define the refined torsion $\rho_{\Gamma}$ of the pair $(C^{*},\Gamma)$ by
\begin{equation}\label{torsionfinite}
\rho_{\Gamma}=\rho_{C^{*},\Gamma}:=\phi_{C^{*}}(c_{\Gamma}),
\end{equation}
where $\phi_{C^{*}}$ is as in \eqref{isocoh}.
\end{defi}

We consider now the finite-dimensional analogue of the odd signature operator $B$, 
defined  in a Riemannian manifold-setting by \eqref{odd}. 
Let $B\colon C^{*}\rightarrow C^{*}$ be the signature operator, defined by
\begin{equation*}
 B:=\partial\Gamma+\Gamma\partial.
\end{equation*}
We consider its square,  $B^{2}=(\partial\Gamma)^{2}+(\Gamma\partial)^{2}$.
Let $I\subset[0,\infty)$.  Let $C^{j}_{I}\subset C^{j}$ be the span of the generalized eigenvectors of the restriction of $B^{2}$ to $C^{j}$, corresponding to eigenvalues $r$ with $\vert  r \vert\in I$.
Both $\Gamma$ and $\partial$ commute with $B$ and hence with $B^{2}$. 
Then, we have $\Gamma:C^{j}_{I}\rightarrow C^{d-j}_{I}$, and 
$\partial: C^{j}_{I}\rightarrow C^{j+1}_{I}$.
Hence, we obtain a subcomplex $C_{I}^{*}$ of $C^{*}$ and the restriction  $\Gamma_{I}$
of $\Gamma$ to $C_{I}^{*}$ is the chirality operator on this complex.
Let $\partial_{I}, B_{I}, B^{\ev}_{I}$ be the restriction to $C_{I}^{*}$ of $\partial, B, B^{\ev}$, 
correspondingly. 
Here, we denote $ B^{\ev}\colon C^{\ev}\rightarrow C^{\ev}$,
where $C^{\ev}:=\bigoplus_{j \ev} C^{j}$.
By Lemma 5.8 in \cite{BK3}, if $0 \notin I$, then the complex $(C^{*}_{I},\partial_{I})$
is acyclic.
For every $\lambda\geq 0$, we have
\begin{equation*}
 C^{*}=C^{*}_{[0,\lambda]}\oplus C^{*}_{(\lambda,\infty)}.
\end{equation*}
In particular,
\begin{align*}
 H^{*}_{(\lambda,\infty)}(\partial)=0, \quad
 H^{*}_{[0,\lambda]}(\partial)\cong H^{*}(\partial).
\end{align*}
Hence, there are canonical isomorphisms
\begin{align*}
 \Phi\colon\det(H^{*}_{(\lambda,\infty)}(\partial))\rightarrow\C, \quad
 \Psi\colon\det(H^{*}_{[0,\lambda]}(\partial))\rightarrow\det(H^{*}(\partial)).
\end{align*}
By \cite[Proposition 5.10]{BK3}, for each $\lambda\geq 0$,
\begin{equation*}
\rho_{\Gamma}={\det}_{\gr}(B^{\ev}_{(\lambda,\infty)})\rho_{\Gamma_{[0,\lambda]}},
\end{equation*}
where $\rho_{\Gamma_{[0,\lambda]}}$ is an element of $\det(H^{*}(\partial))$ via the canonical isomorphism $\Psi$.

\subsection{Refined analytic torsion as an element of the determinant line for compact odd-dimensional manifolds}

We consider now a compact, oriented, Riemannian manifold $(X,g)$ of odd dimension $d=2r-1$ and a complex vector bundle $E$ over $X$, endowed with a flat connection $\nabla$. Let $\Lambda^{k}(X,E)$ be the space of $k$-differential forms on $X$ with values in $E$. 
Let $B$ be the odd signature operator acting on $\Lambda^{k}(X,E)$ as in Definition \ref{oddsign}. Let $I\subset[0,\infty)$.
Let  $\Lambda^{k}_{I}(X,E)$ be the image of $\Lambda^{k}(X,E)$ under the spectral projection of $B^{2}$, corresponding to eigenvalues whose absolute value lie in $I$. 
If $I$ is bounded, then the subspace $\Lambda^{k}_{I}(X,E)$ is finite-dimensional 
(\cite[Section 6.10]{BK3}).
Let $B_{I}$, $\Gamma_{I}$ be the restrictions to $\Lambda^{k}_{I}(X,E)$ of $B$ and $\Gamma$, respectively.
For every $\lambda\geq0$, we have
\begin{equation*}
 \Lambda^{*}(X,E)=\Lambda^{*}_{[0,\lambda]}(X,E)\oplus \Lambda^{*}_{(\lambda,\infty)}(X,E).
\end{equation*}
Since for every $\lambda\geq0$ the complex $\Lambda^{*}_{(\lambda,\infty)}(X,E)$ is acyclic,
we get
\begin{equation*}
  H^{*}_{[0,\lambda]}(X,E)\cong H^{*}(X,E).
\end{equation*}
\begin{defi}
The refined analytic torsion $T=T(\nabla)$ is an element of $\det(H^{*}(X,E))$, defined by
\begin{equation*}
 T(\nabla)=\rho_{\Gamma_{[0,\lambda]}}{\det}_{\gr}(B^{\ev}_{(\lambda,\infty)})e^{i\pi \rank(E_{\chi})\eta_{tr}(g)},
\end{equation*}
where $\rho_{\Gamma_{[0,\lambda]}}$ is as in \eqref{torsionfinite}.
\end{defi}
By \cite[Proposition 7.8]{BK3} and \cite[Theorem 9.6]{BK3}, the refined analytic torsion is independent 
the choice of $\lambda\geq 0$, the choice of the Agmon angle $\theta\in(-\pi,0)$
for $B^{\ev}$, and the Riemannian metric $g$.

\bibliographystyle{amsplain}
\bibliography{ref}
\contact
\end{document}